\numberwithin{equation}{section}
\numberwithin{figure}{section}
\newtheorem{Theorem}{Theorem}[section]
\newtheorem{Lemma}[Theorem]{Lemma}
\newtheorem{Proposition}[Theorem]{Proposition}
\theoremstyle{remark}
\newtheorem{rmk}{Remark}[section]
\theoremstyle{definition}
\newtheorem{Definition}{Definition}[section]
\newcommand{\nn}{\nonumber}
\newcommand{\R}{{\mathbb R}}
\newcommand{\N}{{\mathbb N}}
\newcommand{\Rd}{\R^d}
\newcommand{\Rk}{\R^k}
\newcommand{\Grad}{\nabla_{\!x}}
\newcommand{\del}{\partial}
\newcommand{\dx}{ \, {\rm d} x}
\newcommand{\dt}{ \, {\rm d} t}
\newcommand{\dv}{ \, {\rm d} v}
\newcommand{\CalP}{{\mathcal{P}}}
\newcommand{\Gradv}{\nabla_{\!v}}
\newcommand{\Eps}{\epsilon}
\newcommand{\Epsf}{f_\Eps}
\newcommand{\Epsfn}{f_{\Eps}^{(n)}}
\newcommand{\Epsrho}{\rho_\Eps}
\newcommand{\Epsrhon}{\rho_{\Eps}^{(n)}}
\newcommand{\abs}[1]{\left\lvert#1\right\rvert}
\newcommand{\norm}[1]{\left\lVert#1\right\rVert}
\newcommand{\vint}[1]{\langle #1 \rangle}
\newcommand{\vpran}[1]{\left( #1 \right)}
\begin{document}

\title{First-order aggregation models and zero inertia limits}
%\author{R. C. Fetecau and W. Sun}
\author{R. C. Fetecau  \thanks{Department of Mathematics, Simon Fraser University, 8888 University Dr., Burnaby, BC V5A 1S6, Canada. Email: van@math.sfu.ca}
\and W. Sun \thanks{Department of Mathematics, Simon Fraser University, 8888 University Dr., Burnaby, BC V5A 1S6, Canada. Email: weirans@math.sfu.ca}}
\maketitle

\begin{abstract}
We consider a first-order aggregation model in both discrete and continuum formulations and show rigorously how it can be obtained as zero inertia limits of second-order models. In the continuum case the procedure consists in a macroscopic limit, enabling the passage from a kinetic model for aggregation to an evolution equation for the macroscopic density. We work within the general space of measure solutions and use mass transportation ideas and the characteristic method as essential tools in the analysis.

\vspace{0.5cm}
\textbf{Keywords} : aggregation models; kinetic equations; macroscopic limit; measure solutions; mass transportation; particle methods
%\textbf{MSC 2010} : .
\end{abstract}

%%%%%%%%%%%%%%%%%%%%%%%%%%%%%%%%%%%%%%%%%%%%%%%%%%%%%%%%%%

\section{Introduction}
\label{sect:intro}

The focus of the present paper is a certain mathematical model for emerging self-collective behaviour in biological (and other) aggregations.  There has been a surge of activity in this area of research during the past decade, and in fact the goals have extended well beyond biology. For biological applications, the primary motivation has been to understand and model the mechanisms behind the formation of the various spectacular groups observed in nature (fish schools, bird flocks, insect swarms) \cite{Camazine_etal}. In terms of expansion of this research into collateral areas, we mention studies on robotics and space missions \cite{JiEgerstedt2007}, opinion formation \cite{MotschTadmor2014},  traffic and pedestrian flow \cite{Helbing2001} and social networks \cite{Jackson2010}. 
%social sciences \cite{Bellomo_pref2010}, 

Aggregation models can be classified in two main classes: i) individual/ particle-based, where the movements of all individuals in the group are being tracked, and ii) partial differential equations (PDE) models, formulated as evolution equations for the population density field. We refer to \cite{CarrilloVecil2010} for a recent review of models for aggregation behaviour, where the various microscopic/ macroscopic descriptions of collective motion are discussed and connected. In the present work we deal with a model that has both a discrete/ODE and a continuum/PDE formulation.
% In particular,  \cite{CarrilloVecil2010} explains how one can start from a particle-based model and reach formally, via kinetic theory, its corresponding hydrodynamic/ continuum counterpart.  
%The main goal of the present work is to investigate analytically such a passage from the discrete to the continuum level for a model that has been extensively studied in the last decade. 

The continuum aggregation model considered in this article is given by the following evolution equation for the population density $\rho(t,x)$ in $\Rd$:
\begin{subequations}
\label{eq:aggre}
 \begin{align}
\rho_{t} &+\nabla\cdot(\rho u)=0, \label{eq:aggre1}\\
&u=-\nabla K\ast\rho, \label{eq:aggre2}
 \end{align}
\end{subequations}
where $K$ represents an interaction potential and $\ast$ denotes convolution. The potential $K$ typically incorporates social interactions such as short-range repulsion and long-range attraction. We consider $K$ to be radial, meaning that the inter-individual interactions are assumed to be isotropic.

Equation \eqref{eq:aggre} appears in various contexts related to mathematical models for biological aggregations; we refer
to \cite{M&K, TBL} and references therein for an extensive background and
review of the  literature on this topic. It also arises in a number of
other applications such as material science and granular media
\cite{Toscani2000}, self-assembly of nanoparticles \cite{HoPu2005} and molecular dynamics simulations of matter \cite{Haile1992}. The model has become widely popular and there has been intensive research on it during recent years. 

The particular appeal of model \eqref{eq:aggre} has lain in part in its simple form, which allowed rapid progress in terms of  both numerics and analysis. Numerical simulations demonstrated a wide variety of self-collective or ``swarm" behaviours captured by model \eqref{eq:aggre}, resulting in aggregations on disks, annuli, rings, soccer balls, etc \cite{KoSuUmBe2011, Brecht_etal2011, BrechtUminsky2012}.  Analysis-oriented studies addressed the well-posedness of the initial-value problem for \eqref{eq:aggre} \cite{BodnarVelasquez2, Burger:DiFrancesco, Laurent2007, BertozziLaurent,Figalli_etal2011,BeLaRo2011}, as well as the long time behaviour of its solutions \cite{Burger:DiFrancesco, LeToBe2009, FeRa10, BertozziCarilloLaurent, FeHuKo11,FeHu13}. Also, there has been increasing interest lately on the analysis of \eqref{eq:aggre} by variational methods \cite{BaCaLaRa2013, Balague_etalARMA, ChFeTo2014}.

%Presently, there exits however a fast growing interest on kinetic models for aggregation and their corresponding hydrodynamic limits \cite{CarrilloDOrsognaPanferov, DegondMotsch, Ha:Tadmor}, along similar lines to the more established kinetic theory of gases \cite{CercignaniIllnerPulvirenti}. Other recent works \cite{Chuang_etal, Levine_etal} introduce macroscopic aggregation equations as continuum limits of individual based (microscopic) models and compare the results obtained with the two approaches. 

Equation \eqref{eq:aggre} is frequently regarded  as the continuum approximation, when the number of particles increases to infinity, of the following individual-based model.  Consider $N$ particles in $\mathbb{R}^d$ whose positions $x_i$ ($i=1,\dots,N$) evolve according to the ODE system
\begin{subequations}
\label{eqn:fo-nobz}
\begin{align}
\frac{dx_i}{dt} &= v_i, \\
v_i &= -\frac{1}{N} \displaystyle\sum_{j\neq i}\nabla_{x_i}K(x_i-x_j), \label{eqn:vi-nobz}
\end{align}
\end{subequations}
where $K$ denotes the same interaction potential as in \eqref{eq:aggre}.

Model \eqref{eqn:fo-nobz} was justified and formally derived in \cite{BodnarVelasquez1}, starting from the following second-order model in Newton's law form ($i=1,\dots,N$):
\begin{equation}
\label{eqn:Nform}
\Eps \frac{d^2 x_i}{dt^2} + \frac{dx_i}{dt} = F_i, \qquad \text{ with } \quad F_i=-\frac{1}{N} \displaystyle\sum_{j\neq i}\nabla_{x_i}K(x_i-x_j), 
\end{equation}
and $\Eps>0$ small. From a biological point of view, \eqref{eqn:Nform} considers some small inertia/response time of individuals. By neglecting the $\Eps$-term in \eqref{eqn:Nform}, one can formally derive model \eqref{eqn:fo-nobz}. However, as noted in  \cite{BodnarVelasquez1}, making $\Eps=0$ translates to instantaneous changes in velocities, assumption which, quote, ``is probably too restrictive in many cases". 

%Note here that research on model \eqref{eqn:fo-nobz} has dealt almost exclusively with isotropic potentials, hence the imperative need to reconsider \eqref{eqn:Nform} and its $\ep \to 0$ limit may not have appeared that evident at first.

In view of \eqref{eqn:fo-nobz}, one can write \eqref{eqn:Nform} more conveniently as
\begin{subequations}
\label{eqn:so-nobz}
\begin{align}
    \frac{dx_i}{dt} &= v_i, \\
    \Eps\dfrac{dv_i}{dt} &= -v_i -\dfrac1N\displaystyle\sum_{j\neq i}\nabla_{x_i}K(x_i-x_j).
      \end{align}
\end{subequations}
We point out that despite being at the origin of the extensively-studied models \eqref{eqn:fo-nobz} and \eqref{eq:aggre}, the second-order model \eqref{eqn:Nform} (or \eqref{eqn:so-nobz}) and in particular, the $\Eps \to 0$ limit of its solutions, have been overlooked completely. 
As briefly demonstrated in Section \ref{sect:odelimit}, a rigorous passage from model  from \eqref{eqn:so-nobz} to \eqref{eqn:fo-nobz} can be obtained in the $\Eps \to 0$ limit by using a classical theorem due to Tikhonov  \cite{Tikhonov1952}.

The main focus of the present work is the analogous $\Eps \to 0$ limit at the PDE level. Specifically, we investigate a zero inertia limit that yields the continuum model \eqref{eq:aggre}. Using techniques reviewed in \cite{CarrilloVecil2010}, one can formally take the limit $N\to \infty$ and associate to \eqref{eqn:so-nobz} the following kinetic equation for  the density $f(t,x,v)$ of individuals at position $x \in \Rd$ with velocity $v\in \Rd$:
\begin{equation}
\label{eqn:kinetic}
f_t + v \cdot \nabla_x f = \frac{1}{\epsilon} \Gradv \cdot (v f) + \frac{1}{\epsilon} \Gradv \cdot \left ( (\nabla_x K \ast \rho) f \right),
\end{equation}
where 
\begin{equation}
\label{eqn:rhof}
\rho(t,x) = \int_{\Rd} f(t,x,v) dv.
\end{equation}

We consider measure-valued solutions of the kinetic model \eqref{eqn:kinetic}, in the framework of the well-posedness theory developed in \cite{CanizoCarrilloRosado2011}, and study their macroscopic limit $\Eps\to 0$. Passage from kinetic to macroscopic equations has been extensively studied in the hydrodynamic limits of the nonlinear Botlzmann equations for both classical and renormalized solutions. It is beyond the scope of this introduction to give a detailed account of the work that has been done in this vast and well-established research area, we simply refer here to a recent review paper \cite{SR2014} and the references therein.

Our main result (see Theorems \ref{thm:conv-sol} and \ref{thm:rho-trans}) is that solutions $\Epsf(t,x,v)$ to \eqref{eqn:kinetic} converge weak-$^\ast$ as measures to $f(t,x,v) = \rho(t,x) \delta(v-u(t,x))$ as $\epsilon \to 0$, where $u$ is given in terms of $\rho$ by \eqref{eq:aggre2} and $\rho$ satisfies the continuum equation \eqref{eq:aggre1}. A recurring theme in the paper is the method of characteristics. In particular, we demonstrate how characteristic paths of \eqref{eqn:kinetic} limit as $\Eps \to 0$ to characteristic paths of  \eqref{eq:aggre} (see Theorem \ref{thm:conv-traj}). The subtlety lies in the fact that the limit is degenerate, as characteristics of a second-order system collapse to first-order characteristics.

A key motivation for the zero inertia limits investigated in this article is the following. The recent work \cite{EvFeRy2014} of one of the authors showed that the second-order model \eqref{eqn:Nform} is absolutely essential provided one wants to include anisotropy  in model \eqref{eqn:fo-nobz}. Specifically, \cite{EvFeRy2014} considers anisotropic inter-individual interactions in model \eqref{eqn:fo-nobz} by replacing the explicit representation \eqref{eqn:vi-nobz} of the velocities by a {\em weighted} sum, with weights that depend on a restricted visual perception of the individuals. Hence, these weights depend on the velocity vectors themselves, and the anisotropic analogue of \eqref{eqn:vi-nobz} becomes an {\em implicit} equation to be solved for $v_i$. It is shown in \cite{EvFeRy2014} that solutions of such implicit equations are generally non-unique and additionally, encounter discontinuities through the time evolution. The relaxation system \eqref{eqn:so-nobz}, along with its small inertia/response time, is proposed in  \cite{EvFeRy2014} as a biologically meaningful mechanism to select unique solutions and physically {\em correct} velocity jumps. 
%Hence, to understand \eqref{eqn:fo-nobz} in more general settings, it is imperative to reconsider \eqref{eqn:so-nobz} and its $\Eps \to 0$ limit. 

As for the ODE case, the present study sets the stage for generalizations of the continuum model \eqref{eq:aggre} to include anisotropic interactions. In such an extension, \eqref{eq:aggre2} would become an implicit equation for $u$ and issues such as non-uniqueness and loss of smoothness are again expected to arise.
% In a certain sense, this is part of the cost to be paid for the simplifying assumption that the response time/inertia $\Eps$ is zero in the derivation of the first-order model. \cite{Smoller1983}
We argue that understanding how to approximate first-order models such as \eqref{eq:aggre} and \eqref{eqn:fo-nobz} (and subsequently, their generalizations) in the $\Eps \to 0$ limit of second-order models\footnote{Strictly speaking, \eqref{eqn:kinetic} is a first-order PDE, but we refer to it as a second-order model as it is essentially based on Newton's law \eqref{eqn:Nform}. Furthermore, for $\Eps>0$ fixed, the monokinetic closure of \eqref{eqn:kinetic} yields a momentum equation for the velocity, also in the form of Newton's second law \cite{CarrilloVecil2010}.}, is entirely essential for making further progress in this area of research.

Finally, we point out that  we work in this paper with smooth potentials $K$ that satisfy  $\nabla K \in W^{1,\infty}(\Rd)$. This assumption on $K$ is just slightly more restrictive than the smoothness conditions on $K$ assumed in the well-posedness theory from \cite{CanizoCarrilloRosado2011}, which is essentially used in this work.  The results in this paper  would not apply for instance to pointy potentials, such as Morse potentials. However, as noted in \cite{CanizoCarrilloRosado2011}, from the point of view of applications, it makes little difference to distinguish between a pointy potential and its smooth regularization, as the two would give qualitatively similar aggregation behaviour. 

The summary of the paper is as follows. Section \ref{sect:odelimit} presents the $\Eps\to 0$ limit of the ODE model \eqref{eqn:so-nobz}. Section \ref{sect:kinetic} contains a brief formal derivation of the kinetic model \eqref{eqn:kinetic} from \eqref{eqn:so-nobz} and summarizes the results from \cite{CanizoCarrilloRosado2011} regarding the well-posedness of measure-valued solutions of \eqref{eqn:kinetic}.  In Section \ref{sect:estimates} we derive uniform in $\Eps$ estimates for solutions to \eqref{eqn:kinetic} needed for passing the limit $\Eps \to 0$. Section \ref{sect:conv} contains the major results of this paper, which is the convergence of solutions to \eqref{eqn:kinetic} as $\Eps \to 0$ and how solutions of \eqref{eq:aggre} are recovered in this limit.

%%%%%%%%%%%%%%%%%%%%%%%%%%%%%%%%%%%%%

\section{Convergence as $\Eps \to 0$ of the ODE model \eqref{eqn:so-nobz}}
\label{sect:odelimit}

The limit $\Eps \to 0$ of solutions to \eqref{eqn:so-nobz} can be carried out by a straightforward application of the general theory originally developed by Tikhonov  \cite{Tikhonov1952}. An excellent account of this theory can be found in \cite{Vasileva1963}. Since the application of Tikhonov's theorem does not appear in any of the works on aggregation models, we find it worthwhile to summarize the concepts, as well as to state the convergence result in the context of models  \eqref{eqn:so-nobz} and \eqref{eqn:fo-nobz}. In fact, this framework will be used again in Section \ref{sect:conv}, when we study the $\epsilon \to 0$ limit of the PDE model \eqref{eqn:kinetic}.

Following the setup in \cite{Vasileva1963}, consider the general system 
\begin{equation}
\label{eqn:shorthand}
\left\{
  \begin{array}{l}
    \dfrac{d\mathbf{x}}{dt} = \mathbf{v},\\\\
    \Eps\dfrac{d\mathbf{v}}{dt} = \mathcal{F}(\mathbf{x},\mathbf{v}),
  \end{array}
\right.
\end{equation}
where $\mathbf{x},\mathbf{v} \in \mathbb{R}^{Nd}$ and $\Eps>0$.

Note that indeed, system \eqref{eqn:so-nobz} can be written compactly as \eqref{eqn:shorthand} provided $\mathbf{x},\mathbf{v}$
 denote the concatenation of the space and velocity vectors:
\begin{equation}
\label{eqn:concatenation}
\mathbf{x} = (x_1,\dots,x_N), \qquad \mathbf{v} = (v_1,\dots, v_N),
\end{equation}
and
\begin{equation}
\label{eqn:calF}
\mathcal{F}(\mathbf{x},\mathbf{v})=\left(\mathcal{F}_1(\mathbf{x},v_1),\ldots,\mathcal{F}_N(\mathbf{x},v_N)\right),
\end{equation}
with 
\begin{equation}
\label{eqn:calFi}
\mathcal{F}_i(\mathbf{x},v_i) =  -v_i -\dfrac1N\displaystyle\sum_{j\neq i}\nabla_{x_i}K(|x_i-x_j|), \qquad i=1,\dots,N.
\end{equation}

Tikhonov's result focuses on {\em roots} $\mathbf{v}=\Gamma(\mathbf{x})$ of the equation
\begin{equation}
\label{eqn:calFroots}
\mathcal{F}(\mathbf{x},\mathbf{v})=0.
\end{equation}
In particular, the goal is to establish conditions on a root $\Gamma$ which guarantee that solutions of \eqref{eqn:shorthand} converge to solutions of the {\em degenerate} system associated to $\Gamma$, defined as
\begin{equation}
\label{eqn:shorthand:fo}
\left\{
  \begin{array}{l}
    \dfrac{d\mathbf{x}}{dt} = \mathbf{v},\\\\
    \mathbf{v} = \Gamma(\mathbf{x}).
  \end{array}
\right.
\end{equation}

Using this terminology (see \eqref{eqn:concatenation}-\eqref{eqn:calFi}), model \eqref{eqn:fo-nobz} is the degenerate system associated to the {\em unique} root $\mathbf{v} =\Gamma(\mathbf{x})$, where $\Gamma(\mathbf{x}) = (\gamma_1(\mathbf{x}), \dots, \gamma_N(\mathbf{x}))$ is given {\em explicitly} by
\begin{equation}
\label{eqn:root}
v_i = \gamma_i(\mathbf{x}) := -\dfrac1N\displaystyle\sum_{j\neq i}\nabla_{x_i}K(|x_i-x_j|).
\end{equation}

In general, take a closed and bounded set $D\subset\R^{Nd}$, and consider a root $\mathbf{v}=\Gamma(\mathbf{x})$, $\Gamma:D\to\R^{Nd}$. The root $\Gamma$ is called \textit{isolated} if there is a $\delta>0$ such that for all $\mathbf{x}\in D$, the only element in $B(\Gamma(\mathbf{x}),\delta)$ that satisfies $\mathcal{F}(\mathbf{x},\mathbf{v})=0$ is $\mathbf{v}=\Gamma(\mathbf{x})$.

For a fixed configuration $\mathbf{x}^\ast$, the system
\begin{equation}\label{eqn:adj}
\dfrac{d\mathbf{v}}{d\tau}=\mathcal{F}(\mathbf{x}^\ast,\mathbf{v}),
\end{equation}
is called the \textit{adjoined system of equations}. 

An isolated root $\Gamma$ is called \textit{positively stable} in $D$, if $\mathbf{v}^\ast=\Gamma(\mathbf{x}^\ast)$ is an asymptotically stable stationary point of \eqref{eqn:adj} as $\tau\to\infty$, for each $\mathbf{x}^\ast\in D$. The \textit{domain of influence} of an isolated positively stable root $\Gamma$ is the set of points $(\mathbf{x}^\ast,\tilde{\mathbf{v}})$ such that the solution of \eqref{eqn:adj} satisfying $\mathbf{v}|_{\tau=0}=\tilde{\mathbf{v}}$ tends to $\mathbf{v}^\ast = \Gamma(\mathbf{x}^\ast)$ as $\tau\to\infty$.

Tikhonov's theorem  \cite{Tikhonov1952} states the following:

\begin{Theorem}[Tikhonov \cite{Tikhonov1952, Vasileva1963}] \label{thm:Tikhonov}
Assume that $\Gamma$ is an isolated positively stable root of \eqref{eqn:calFroots} in some bounded closed domain $D$. Consider a point $(\mathbf{x}_0,\mathbf{v}_0)$ in the domain of influence of this root, and assume that the degenerate system  \eqref{eqn:shorthand:fo} has a solution
 $\mathbf{x}(t)$ initialized at $\mathbf{x}(t_0) = \mathbf{x}_0$, that lies in $D$ for all $t\in[t_0,T]$. Then, as $\Eps\to 0$, the solution 
$(\mathbf{x}_\Eps (t),\mathbf{v}_\Eps (t))$ 
of \eqref{eqn:shorthand} initialized at $(\mathbf{x}_0,\mathbf{v}_0)$, converges to $(\mathbf{x}(t),\mathbf{v}(t)):=(\mathbf{x}(t),\Gamma(\mathbf{x}(t)))$ in the following sense:

%\begin{enumerate}
i) $\displaystyle\lim_{\Eps\to 0}\mathbf{v}_\Eps(t)=\mathbf{v}(t)$ \text{ for all } $t\in(t_0,T^*]$, \text {and }
\smallskip

ii) $\displaystyle\lim_{\Eps\to 0}\mathbf{x}_\Eps(t)=\mathbf{x}(t)$ \text{ for all } $t\in[t_0,T^*]$,

\smallskip
%\end{enumerate}
for some $T^*<T$.
\end{Theorem}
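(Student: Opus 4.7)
The plan is to exploit the singular perturbation structure of \eqref{eqn:shorthand}, treating $t$ as a slow time and $\tau = (t-t_0)/\Eps$ as a fast time. Convergence should then follow from a boundary-layer analysis: in an initial layer of width $\Or(\Eps)$ near $t=t_0$, the fast variable $\mathbf{v}_\Eps$ undergoes a rapid transient during which $\mathbf{x}_\Eps$ barely moves, while for $t$ bounded away from $t_0$ by any fixed amount the solution tracks the slow manifold $\mathbf{v}=\Gamma(\mathbf{x})$.

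First I would analyse the inner region. Rewriting \eqref{eqn:shorthand} in the fast time $\tau$ yields $d\mathbf{x}/d\tau = \Eps \mathbf{v}$ and $d\mathbf{v}/d\tau = \mathcal{F}(\mathbf{x},\mathbf{v})$. On any bounded $\tau$-interval $[0,\tau_1]$, $\mathbf{x}_\Eps$ stays $\Or(\Eps)$-close to $\mathbf{x}_0$, so by continuous dependence the inner problem reduces to the adjoined system \eqref{eqn:adj} with frozen parameter $\mathbf{x}^\ast = \mathbf{x}_0$. Since $(\mathbf{x}_0,\mathbf{v}_0)$ lies in the domain of influence of $\Gamma$, the adjoined trajectory tends to $\Gamma(\mathbf{x}_0)$ as $\tau\to\infty$. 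Hence, given any $\delta>0$, asymptotic stability supplies a fixed $\tau_1$ and an $\Eps_0$ such that for $\Eps<\Eps_0$,
\[
\abs{\mathbf{v}_\Eps(t_0 + \Eps\tau_1) - \Gamma(\mathbf{x}_0)} < \delta, \qquad \abs{\mathbf{x}_\Eps(t_0+\Eps\tau_1) - \mathbf{x}_0} < \delta.
\]

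Next I would handle the outer region $[t_0+\Eps\tau_1, T^\ast]$, where the trajectory should remain in a small tube around the reduced orbit $(\mathbf{x}(t),\Gamma(\mathbf{x}(t)))$. The standard device is a (uniform in $\mathbf{x}^\ast\in D$) Lyapunov function for the adjoined system, whose existence follows from positive stability and compactness of $D$. Differentiating this Lyapunov function along \eqref{eqn:shorthand} with the ansatz $\mathbf{v}_\Eps = \Gamma(\mathbf{x}_\Eps) + \mathbf{w}_\Eps$ produces dissipation of order $1/\Eps$ for $\mathbf{w}_\Eps$, competing against an $\Or(1)$ drift from the motion of $\mathbf{x}(t)$ along the slow manifold; a Gronwall-type estimate then confines $\mathbf{w}_\Eps$. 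Once $\mathbf{v}_\Eps$ is shown to track $\Gamma(\mathbf{x}_\Eps)$, the $\mathbf{x}$-equation becomes an $\Or(\Eps)$-perturbation of the reduced system \eqref{eqn:shorthand:fo}, and continuous dependence closes the argument. The reduction from $T$ to some $T^\ast<T$ is forced by the uniformity margin needed to keep $\mathbf{x}_\Eps(t)$ inside $D$.

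The main obstacle is coordinating the three small parameters in the correct order: $\delta$ must be chosen first (small enough that a $\delta$-neighbourhood of $\Gamma(\mathbf{x}(t))$ lies inside the Lyapunov basin for every $t\in[t_0,T^\ast]$), then $\tau_1$ is fixed in terms of $\delta$ via the adjoined dynamics at $\mathbf{x}_0$, and only afterwards may $\Eps_0$ be picked so small that, on $[t_0,t_0+\Eps\tau_1]$, the drift of $\mathbf{x}_\Eps$ stays below $\delta$ and the inner trajectory reaches the prescribed $\delta$-neighbourhood of $\Gamma(\mathbf{x}_0)$. With these quantifiers correctly nested, conclusion (ii) holds on the full interval $[t_0,T^\ast]$, whereas (i) only on $(t_0,T^\ast]$, reflecting the unavoidable jump of $\mathbf{v}_\Eps$ across the initial layer from $\mathbf{v}_0$ to $\Gamma(\mathbf{x}_0)$.
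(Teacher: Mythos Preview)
The paper does not actually prove Theorem~\ref{thm:Tikhonov}; it is stated as a classical result and attributed to Tikhonov~\cite{Tikhonov1952} with the exposition in Vasil'eva~\cite{Vasileva1963}. So there is no in-paper proof to compare against: the authors simply quote the theorem and then verify its hypotheses for the specific aggregation model.

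Your sketch is the standard boundary-layer/singular-perturbation strategy and is essentially what one finds in the references. One caution: you assert the existence of a Lyapunov function for the adjoined system that is \emph{uniform in the frozen parameter} $\mathbf{x}^\ast\in D$. Converse Lyapunov theorems do produce a Lyapunov function at each fixed $\mathbf{x}^\ast$, but uniformity over the compact $D$ (in both the size of the basin and the decay rate) is an extra step that has to be argued, typically via compactness together with continuity of $\mathcal{F}$ and of the flow in the parameter. Vasil'eva's original argument in fact avoids Lyapunov functions altogether and works directly with the $\epsilon$--$\delta$ definition of asymptotic stability plus Gronwall estimates; either route is fine, but if you keep the Lyapunov approach you should flag where the uniformity comes from. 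With that caveat, your ordering of quantifiers ($\delta$, then $\tau_1$, then $\Eps_0$) and your explanation of why (i) fails at $t=t_0$ while (ii) does not are both correct.
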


\begin{rmk}\label{remark:boundary layer}
Note that the convergence of $\mathbf{v}_\Eps(t)$ to $\mathbf{v}(t)$ holds for $t>t_0$ and normally does not occur at the initial time $t_0$, unless $\mathbf{v}_0=\Gamma(\mathbf{x}_0)$.
\end{rmk}

Applying Theorem \ref{thm:Tikhonov} to models \eqref{eqn:so-nobz} and \eqref{eqn:fo-nobz} is immediate. Given any spatial configuration $\mathbf{x}$, the root $\Gamma$ given by \eqref{eqn:root} is unique, hence isolated. Fix now an arbitrary spatial configuration $\mathbf{x^\ast}=(x_1^\ast,x_2^\ast,\dots,x_N^\ast)$ and inspect the adjoint system \eqref{eqn:adj} with $\cal F$ given by \eqref{eqn:calF}-\eqref{eqn:calFi}. It is clear that each component of the adjoint system,
\[
\frac{ d v_i}{d \tau} = {\cal F}_i(\mathbf{x}^\ast,v_i),
\]
has a globally attracting equilibrium $v_i^\ast = \gamma_i(\mathbf{x}^\ast)$. Consequently, $\mathbf{v}^\ast = (v_1^\ast,\dots,v_N^\ast)$ is positively stable and its domain of influence is $\{\mathbf{x}^\ast\} \times \mathbb{R}^{Nd}$.

\begin{Theorem}[Convergence of the ODE model]
Consider a point $(\mathbf{x}_0,\mathbf{v}_0) \in \mathbb{R}^{2Nd}$, and suppose the first-order model \eqref{eqn:fo-nobz} has a solution $\mathbf{x}(t) = (x_1(t),\dots,x_N(t)) \in \mathbb{R}^{Nd}$ initialized at $\mathbf{x}_0$ for $t \in [t_0,T]$. Then, as $\Eps \to0$, the solution $(\mathbf{x}_\Eps(t),\mathbf{v}_\Eps(t)) \in \mathbb{R}^{2Nd}$ of the second-order model \eqref{eqn:so-nobz} initialized at $(\mathbf{x}_0,\mathbf{v}_0)$, converges to $(\mathbf{x}(t),\mathbf{v}(t))$, with $\mathbf{v}(t)=(v_1(t),\dots,v_N(t))$ defined in terms of $\mathbf{x}(t)$ by \eqref{eqn:root}. 

Specifically, we have the convergence i) and ii) listed in Theorem \ref{thm:Tikhonov}, with the caveat that the convergence of $\mathbf{v}_\Eps(t)$ does not hold initially, unless $\mathbf{v}_0$ and $\mathbf{x}_0$ are related by \eqref{eqn:root}.

\end{Theorem}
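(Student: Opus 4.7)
The plan is a direct application of Tikhonov's theorem (Theorem \ref{thm:Tikhonov}) to the system \eqref{eqn:so-nobz}, written in the compact form \eqref{eqn:shorthand} via the concatenation \eqref{eqn:concatenation} and the vector field \eqref{eqn:calF}--\eqref{eqn:calFi}. Most of the verification is already assembled in the paragraph immediately preceding the statement; I would simply consolidate the three checks required by Tikhonov's hypotheses and then read off the conclusion.

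First I would verify that $\Gamma$ defined by \eqref{eqn:root} is an isolated root of $\mathcal{F}(\mathbf{x},\mathbf{v})=0$. Solving $\mathcal{F}_i(\mathbf{x},v_i)=0$ for $v_i$ yields $v_i=\gamma_i(\mathbf{x})$ uniquely, so $\Gamma$ is globally unique as a function of $\mathbf{x}$ and therefore trivially isolated on any domain $D$.

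Next I would establish positive stability. The key observation is that the adjoint system \eqref{eqn:adj} decouples across the index $i$, since $\mathcal{F}_i(\mathbf{x}^\ast,v_i)$ depends on the velocity only through $v_i$. Each component reduces to the scalar linear ODE $\frac{d v_i}{d\tau}=-v_i+\gamma_i(\mathbf{x}^\ast)$, whose solution $v_i(\tau)=\gamma_i(\mathbf{x}^\ast)+\bigl(v_i(0)-\gamma_i(\mathbf{x}^\ast)\bigr)e^{-\tau}$ converges exponentially to the equilibrium for arbitrary initial data. Thus $\Gamma(\mathbf{x}^\ast)$ is globally asymptotically stable for the adjoint system, which in particular gives positive stability on any closed bounded set $D\subset\R^{Nd}$ containing the trajectory $\mathbf{x}(t)$, $t\in[t_0,T]$, of the first-order model \eqref{eqn:fo-nobz}; such a $D$ exists by compactness of the continuous image $\mathbf{x}([t_0,T])$.

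Finally, global asymptotic stability of the adjoint equilibrium means the domain of influence is the full set $D\times\R^{Nd}$, so the given initial point $(\mathbf{x}_0,\mathbf{v}_0)\in\R^{2Nd}$ lies in it with no restriction on $\mathbf{v}_0$. With all hypotheses of Theorem \ref{thm:Tikhonov} verified, the stated convergence of $(\mathbf{x}_\Eps(t),\mathbf{v}_\Eps(t))$ to $(\mathbf{x}(t),\Gamma(\mathbf{x}(t)))$ in senses i) and ii) follows immediately, and the failure of velocity convergence at $t=t_0$ unless $\mathbf{v}_0=\Gamma(\mathbf{x}_0)$ is exactly the boundary-layer phenomenon recorded in Remark \ref{remark:boundary layer}. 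Because each hypothesis reduces to a transparent scalar linear calculation, there is no genuine obstacle; the only mild care needed is in selecting the bounded domain $D$ adapted to the first-order trajectory, which is automatic by compactness of $[t_0,T]$.
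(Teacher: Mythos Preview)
Your proposal is correct and mirrors the paper's own argument almost exactly: the paper verifies, in the paragraph just before the theorem, that the root \eqref{eqn:root} is unique (hence isolated), that the adjoint system decouples into scalar linear ODEs with globally attracting equilibria $\gamma_i(\mathbf{x}^\ast)$, and that consequently the domain of influence is all of $\{\mathbf{x}^\ast\}\times\R^{Nd}$, after which Theorem~\ref{thm:Tikhonov} applies directly. The only detail you add beyond the paper is the explicit choice of a compact $D$ containing the first-order trajectory, which is harmless and implicit in the paper's treatment.
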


%%%%%%%%%%%%%%%%%%%%%%%%%%%%%%%%%%%%%%%%%%%%%%%%%

\section{Kinetic model and its well-posedness}
\label{sect:kinetic}

\subsection{Formal derivation of the kinetic model}

The kinetic equation associated to the particle model \eqref{eqn:so-nobz} can be derived using the techniques reviewed in \cite{CarrilloVecil2010}.  We present here the derivation via the mean-field limit \cite{CarrilloVecil2010}. 

Consider the distribution density $f_N$ associated to the solution $(x_i(t),v_i(t))$ $(i=1,\dots,N)$ of  \eqref{eqn:so-nobz}, that is,
\[
f_N(t,x,v) = \frac{1}{N} \sum_{i=1}^N \delta(x-x_i(t)) \delta(v-v_i(t)).
\]

Take a test function $\varphi \in C^1_0(\mathbb{R}^{2d})$ and compute, using \eqref{eqn:so-nobz}:
\begin{align*}
\frac{d}{dt} \langle f_N(t),\varphi \rangle 
&=  \frac{1}{N} \sum_{i=1}^N \frac{d}{dt} \varphi(x_i(t),v_i(t)) \\
&=  \frac{1}{N} \sum_{i=1}^N \nabla_x \varphi(x_i(t),v_i(t)) \cdot v_i(t) \\
& \quad +  \frac{1}{N } \sum_{i=1}^N \nabla_v \varphi(x_i(t),v_i(t)) \cdot \frac{1}{\Eps} \Bigl ( -v_i -\dfrac1N\displaystyle\sum_{j\neq i}\nabla_{x_i}K(|x_i-x_j|) \Bigr).
\end{align*}

Denote by $\rho_N(t,x)$ the macroscopic density of $f_N$:
\[
\rho_N(t,x) := \int f_N(t,x,v) dv = \frac{1}{N} \sum_{i=1}^N \delta(x-x_i(t)).
\] 
Since 
\begin{align*}
\nabla K  \ast \rho_N(x) &= \int \nabla K(x-y) \rho_N(y) dy  \\
& = \frac{1}{N}  \sum_{j=1}^N \nabla K(x-x_j),
\end{align*}
we get
\[
\frac{d}{dt} \langle   f_N(t),\varphi \rangle = 
\langle f_N(t), \nabla_x \varphi \cdot v\rangle + \left \langle f_N(t), \nabla_v \varphi \cdot \frac{1}{\Eps} \left ( -v - \nabla K \ast \rho_N (x) \right) \right \rangle.
\]
Hence, after integration by parts in $x$ and $v$ we get
\[
\left \langle \frac{\partial f_N}{\partial t} + v \cdot \nabla_x f_N - \frac{1}{\Eps} \nabla_v \cdot \left (v f_N + (\nabla K \ast \rho_N) f_N \right), \varphi \right \rangle = 0. 
\]
In strong form, $f_N$ satisfies
\[
\frac{\partial f_N}{\partial t} + v \cdot \nabla_x f_N = \frac{1}{\Eps} \nabla_v \cdot \left( v f_N\right) + \frac{1}{\Eps} \nabla_v \cdot \left( (\nabla K \ast \rho_N) f_N \right).
\]
Assuming that $f_N$ converges (on a subsequence) to a density $f$, taking the limit $N\to \infty$, formally, in the equation above, yields the kinetic equation \eqref{eqn:kinetic}.

%%%%%%%

\subsection{Well-posedness for \eqref{eqn:kinetic} with $\epsilon>0$ fixed.} We discuss first the well-posedness theory of measure-valued solutions of \eqref{eqn:kinetic}, as developped in \cite{CanizoCarrilloRosado2011}. Since for later purposes (to send $\Eps \to 0$) we need to work with smooth solutions, we present briefly the existence theory for classical solutions as well. A key ingredient is the method of characteristics, which is eventually used in  Section \ref{sect:conv} to connect the PDE analysis with the ODE theory via Tikhonov's theorem.

\paragraph{Measure solutions.} In \cite{CanizoCarrilloRosado2011} the authors consider various kinetic models for aggregation and study the well-posedness of measure-valued solutions. The results there use the following measure space setup. Denote by $\mathcal{P}_1(\Rk)$ the space of probability measures on $\Rk$ that have finite first moment, i.e.,
\[
\mathcal{P}_1(\Rk) = \biggl \{ f \in \mathcal{P}(\Rk):  \int_{\Rk} |x| f(x) dx < \infty \biggr \}.
\]
We note that the convention adopted in \cite{CanizoCarrilloRosado2011}, which is also used throughout the present paper, is to write $\int \varphi(x) \mu(x) dx $ as the integral of $\varphi$ with respect to the measure $\mu$, regardless of whether $\mu$ is absolutely continuous with respect to the Lebesgue measure.

\smallskip
 \begin{rmk}
\label{rmk:W1}
Endowed with the $1$-Wasserstein distance $W_1$, the space  $\mathcal{P}_1(\Rk)$ is a complete metric space, and convergence in the $W_1$ metric relates to the usual weak-$^*$ convergence of measures. Specifically, for $\{f_n\}_{n\geq 1}$ and $f$ in $\mathcal{P}_1(\Rk)$,  the following are equivalent:

\smallskip
i) $f_n \stackrel{W_1}{\longrightarrow} f$, as $n\to \infty$ 
\smallskip

ii) $f_n \stackrel{w^*}{\longrightarrow}  f$ as measures as $n\to \infty$ and
$\sup_{n\geq 1} \int_{|x|>R} |x| f_n(x) dx \to 0, \quad \text{ as } R\to \infty$.
\end{rmk}

Results in \cite{CanizoCarrilloRosado2011} use $\mathcal{P}_1(\mathbb{R}^d \times \Rd)$ endowed with the $W_1$ distance as the measure space where solutions of the various kinetic models  are sought for. Model \eqref{eqn:kinetic}, with $\Eps>0$ fixed, is in fact a particular case of the general class of models studied considered there and the results from \cite{CanizoCarrilloRosado2011} can be applied directly. We summarize briefly the results from  \cite{CanizoCarrilloRosado2011}. 

Consider the characteristic equations associated to model \eqref{eqn:kinetic}:
\begin{equation}
\begin{aligned} \label{eq:characteristics}
     \frac{{\rm d} x}{\dt} & \, = v \,,
\\
     \frac{{\rm d} v}{\dt} & \, = -\frac{1}{\Eps} v - \frac{1}{\Eps} \Grad K \ast \rho,
\end{aligned}
\end{equation}
initialized at $(x,v)_{\mid_{t=0}} = (x_0, v_0)$. The main idea in  \cite{CanizoCarrilloRosado2011} is to define a measure solution to \eqref{eqn:kinetic} in a mass transportation sense, using the flow map defined by \eqref{eq:characteristics}.

Suppose $E(t,x)$ is a given continuous vector field on $[0,T]\times \Rd$ which is locally Lipschitz with respect to $x$ . Take the characteristic system associated to $E$:
\begin{equation}
\begin{aligned} \label{eq:characteristicsE}
     \frac{{\rm d} x}{\dt} & \, = v \,,
\\
     \frac{{\rm d} v}{\dt} & \, = -\frac{1}{\Eps} v - \frac{1}{\Eps} E(t,x),
\end{aligned}
\end{equation}
with initial data $(x,v)_{\mid_{t=0}} = (x_0, v_0)$. Then standard ODE theory guarantees existence and uniqueness of smooth trajectories $(x_\Eps,v_\Eps) \in C^1([0,T], \Rd \times \Rd)$ originating from $(x_0,v_0)$. Furthermore, one can define the flow map $\mathcal{T}^{t,\Eps}_{E}$ of  \eqref{eq:characteristicsE} by
\[
(x_0,v_0) \xrightarrow{\mathcal{T}^{t,\Eps}_{E}} (x,v), \quad \qquad (x,v) = (x_\Eps(t),v_\Eps(t)),
\]
where $(x_\Eps(t),v_\Eps(t))$ is the unique solution of \eqref{eq:characteristicsE} that starts at $(x_0,v_0)$. 

Now take a measure $f_0 \in \mathcal{P}_1(\Rd \times \Rd)$ and $T>0$, and consider the mass-transport (or push-forward) of $f_0$ by $\mathcal{T}^{t,\Eps}_{E}$. By definition, the push-forward $f_t = \mathcal{T}^{t,\Eps}_{E} \# f_0$ is a measure-valued function $f:[0,T] \to \mathcal{P}_1(\Rd \times \Rd)$, that satisfies
\begin{equation}
\label{eqn:mass-transport}
\int_{\R^{2d}} \zeta (x,v) f(t,x,v) dx dv = \int_{\R^{2d}} \zeta(\mathcal{T}^{t,\Eps}_{E}(X,V)) f_0(X,V) dX dV,
\end{equation}
for all $\zeta \in C_b(\R^{2d})$. 

Return to \eqref{eq:characteristics} and define the vector field $E[f]$ associated to a measure $f$ as 
\begin{equation}
\label{eqn:Ef}
E[f] = -\nabla K \ast \rho.
\end{equation}
Here, $\rho$ denotes the first marginal of $f$ defined by
\begin{equation}
\label{eqn:1stm}
    \int_{\R^d} \tilde\psi(x) \rho(t, x) \dx = \int_{\R^{2d}} \tilde\psi(x) f (t, x, v) \dx\dv,
\end{equation}
for all $\tilde\psi \in C_b(\R^d)$. Note that throughout the paper, by an abuse of notation, we also write $\rho$ as in \eqref{eqn:rhof}.

The following definition of a measure solution of \eqref{eqn:kinetic}  is adopted from \cite{CanizoCarrilloRosado2011}. 
\begin{Definition}
\label{defn:sol}
Take an initial measure  $f_0 \in \mathcal{P}_1(\Rd \times \Rd)$ and $T>0$. A function $f:[0,T] \to \mathcal{P}_1(\Rd \times \Rd)$ is a solution of the kinetic equation \eqref{eqn:kinetic} with initial condition $f_0$ if:
\begin{enumerate}
\item The field $E[f]$ defined by \eqref{eqn:Ef}  is locally Lipschitz with respect to $x$ and $E[f](t,x)< C(1+|x|)$, for all $t,x \in [0,T]\times \Rd$, for some $C>0$, and
\item $f_t = \mathcal{T}^{t,\Eps}_{E[f]} \# f_0$.
\end{enumerate}
\end{Definition}

The main result in \cite{CanizoCarrilloRosado2011} establishes the existence and uniqueness for measure solutions via a fixed point argument. We state and discuss the result below.
\begin{Theorem}[Measure-valued solutions \cite{CanizoCarrilloRosado2011}] 
\label{thm:exist-measure}
Assume the following properties on the potential $K$:
\[
%K \in C^1(\Rd),  
\nabla K \text{ is locally Lipschitz, and } |\nabla K(x)| \leq C(1+|x|) \text{ for all } x\in \Rd,
\]
for some $C>0$. Consider $f_0 \in \mathcal{P}_1(\Rd \times \Rd)$ with compact support. 

Then there exists a unique solution $\Epsf \in C([0,\infty); \mathcal{P}_1(\Rd \times \Rd))$ of \eqref{eqn:kinetic}, in the sense of Definition \ref{defn:sol}, whose support grows at a controlled rate. Specifically, there exists an increasing function $R_\Eps(T)$ such that for all $T>0$,
\[
\operatorname{supp }\Epsf(t) \subset B_{R_\Eps(T)} \quad  \text{ for all } t \in [0,T].
\]
\end{Theorem}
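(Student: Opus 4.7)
The plan is to apply the Banach fixed point theorem in a complete metric space of measure-valued curves, using the characteristic formulation built into Definition \ref{defn:sol}. Concretely, for a small time horizon $T>0$ and a radius $R$ large enough to contain the support of $f_0$ with room to spare, I would work in
\[
\mathcal{X}_{T,R} = \{ f \in C([0,T]; \mathcal{P}_1(\Rd\times\Rd)) : f(0)=f_0, \; \operatorname{supp} f(t) \subset B_R, \; t\in[0,T]\},
\]
endowed with the uniform $1$-Wasserstein distance $d_T(f,g) := \sup_{t\in[0,T]} W_1(f(t),g(t))$, which makes it complete. On this space I define the map $\mathcal{G}(f)(t) := \mathcal{T}^{t,\Eps}_{E[f]} \# f_0$; fixed points are exactly solutions in the sense of Definition \ref{defn:sol}.

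Step one is to check that $\mathcal{G}$ maps $\mathcal{X}_{T,R}$ into itself. Given $f \in \mathcal{X}_{T,R}$, the field $E[f] = -\Grad K \ast \rho_f$ inherits the linear growth $|E[f](t,x)| \leq C(1+|x|)$ and local Lipschitz regularity in $x$ from the corresponding assumptions on $\nabla K$, so the characteristic system \eqref{eq:characteristicsE} admits a global $C^1$ flow. A Gronwall estimate applied to $|x_\Eps(t)|+|v_\Eps(t)|$ along \eqref{eq:characteristicsE}, using $\dot v_\Eps = -\Eps^{-1}(v_\Eps + E[f](t,x_\Eps))$ and the linear growth of $E[f]$, yields a uniform bound on trajectories starting in a fixed bounded set, so $\operatorname{supp} \mathcal{G}(f)(t) \subset B_{R_\Eps(t)}$ with an explicit function $R_\Eps$ depending only on $\operatorname{supp} f_0$, $\Eps$, and $C$, and in particular independent of $f$. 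Choosing $T$ so that $R_\Eps(T)\leq R$ gives the self-mapping property, and continuity in time in the $W_1$ sense follows from continuous dependence of the flow on $t$.

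Step two is the contraction estimate. For $f,g\in \mathcal{X}_{T,R}$, I compare the two characteristic flows starting from a common point $(x_0,v_0)$. Subtracting \eqref{eq:characteristicsE} for $E[f]$ and $E[g]$, using the local Lipschitz property of $\nabla K$ on the ball $B_{2R}$ together with the key Wasserstein bound
\[
\|E[f](t,\cdot)-E[g](t,\cdot)\|_{L^\infty(B_R)} \leq \mathrm{Lip}(\nabla K;B_{2R})\, W_1(\rho_f(t),\rho_g(t)) \leq \mathrm{Lip}(\nabla K;B_{2R})\, W_1(f(t),g(t)),
\]
and Gronwall, gives a pointwise trajectory bound $|\mathcal{T}^{t,\Eps}_{E[f]}(x_0,v_0) - \mathcal{T}^{t,\Eps}_{E[g]}(x_0,v_0)| \leq C(T,\Eps,R)\, d_T(f,g)$. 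Pushing forward by $f_0$ and using the diagonal transport plan in the dual definition of $W_1$, this yields $d_T(\mathcal{G}(f),\mathcal{G}(g)) \leq C(T,\Eps,R)\, d_T(f,g)$ with $C(T,\Eps,R)\to 0$ as $T\to 0$. For $T$ sufficiently small $\mathcal{G}$ is a strict contraction and the Banach fixed point theorem yields a unique solution on $[0,T]$.

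To extend to all $t\geq 0$ I would iterate: given the solution $\Epsf$ up to some time $T_0$, its support lies in $B_{R_\Eps(T_0)}$, and the same construction with initial datum $\Epsf(T_0)$ yields an extension on $[T_0,T_0+\tau]$ with $\tau>0$ depending only on $R_\Eps(T_0)$ and $\Eps$. Since the a priori support function $R_\Eps$ produced in step one applies at every restart and grows at a controlled rate, the iteration never stalls and produces a unique global-in-time solution, whose support automatically obeys the stated bound. The only genuinely delicate point in this outline is choosing the pair $(T,R)$ so that the self-mapping bound $R_\Eps(T) \leq R$ and the contraction constant $C(T,\Eps,R) < 1$ close simultaneously; the remaining ingredients — linear growth, local Lipschitz regularity of $\nabla K$, and the comparison $W_1(\rho_f,\rho_g) \leq W_1(f,g)$ for marginals — are immediate from the hypotheses on $K$ and from standard properties of Wasserstein distances.
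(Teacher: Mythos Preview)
Your proposal is correct and follows essentially the same route as the paper: the paper does not give its own proof but summarizes the argument from \cite{CanizoCarrilloRosado2011}, which is precisely the Banach fixed point scheme in $C([0,T];\CalP_1)$ with the uniform $W_1$ distance, the map $\mathcal{G}[f](t)=\mathcal{T}^{t,\Eps}_{E[f]}\#f_0$, and iteration in time to obtain a global solution with controlled support. Your write-up supplies more of the details (the Gronwall/self-mapping step, the Wasserstein comparison of the fields, the extension argument) than the paper's sketch, but the strategy is identical.
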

The proof of this result in \cite{CanizoCarrilloRosado2011} relies on a fixed point argument. Briefly, the setup in \cite[Theorem 3.10]{CanizoCarrilloRosado2011} is the following.  Fix $T>0$ and consider the metric space $\mathcal{F}$ made of all $f \in C([0,T]; \mathcal{P}_1(\Rd \times \Rd))$ such that the support of $f_t$ is contained in a fixed ball $B_R$ for all $t \in [0,T]$. The distance in $\mathcal{F}$ is taken to be
\[
\mathcal{W}_1(f,g) = \sup_{t\in [0,T]} W_1(f_t,g_t),
\]
where $W_1$ denotes the $1$-Wasserstein distance. For $f\in \mathcal{F}$ fixed, define the map (see \eqref{eqn:Ef}):
\[
\mathcal{G}[f](t) :=  \mathcal{T}^{t,\Eps}_{E[f]} \# f_0.
\]
It is shown in \cite{CanizoCarrilloRosado2011} that this map is contractive and hence, is has a unique fixed point in $\mathcal{F}$. This fixed point is the desired solution to \eqref{eqn:kinetic}. 

We also note that by results in \cite{AGS2005,Figalli_etal2011}, $f_\Eps$ given by Theorem \ref{thm:exist-measure} is also a weak solution of \eqref{eqn:kinetic}, i.e., it satisfies
\begin{equation} \label{eq:f-Eps-weak}
\begin{aligned}
& \int_0^T \iint_{\R^{d} \times \R^d} \del_t \phi(t, x, v) \Epsf(t, x, v) \dx\dv \dt
    + \int_0^T \iint_{\R^d \times \R^d} \Grad \phi \cdot v \Epsf \dx\dv\dt
\\
& \qquad - \frac{1}{\Eps} 
         \int_0^T \iint_{\R^d \times \R^d} 
             \nabla_v \phi \cdot (v + \Grad K \ast \Epsrho) \Epsf \dx\dv\dt
    + \iint_{\R^d \times \R^d} \phi(0, x, v) f_0(x, v) \dx\dv=0,
\end{aligned}
\end{equation}
for any $\phi \in C^1_c([0, T); C^1_b(\R^d \times \R^d))$. 

%We comment that for $\Grad K \in W^{1, \infty}$, the unique weak solution $f_\Eps \in C([0, T); \CalP_1(\R^d \times \R^d))$ is the same as the one defined in Definition~\ref{defn:sol}. 

%Similar result holds for the macroscopic equation for $\rho$. In particular, for any $T > 0$, assuming that $\Grad K \in W^{1, \infty}$, one can apply the framework in \cite{CanizoCarrilloRosado2011} and obtain a unique measure-valued solution $\rho \in C([0, T); \CalP_1(\R^d))$ in the sense of Definition~\ref{defn:sol}. This unique solution coincides with the unique weak solution in $C([0, T); \CalP_1(\R^d))$ given by~\eqref{eq:rho-weak}.

\smallskip
\begin{rmk}Take a measure solution $f_\Eps$ of \eqref{eqn:kinetic} for $\Eps>0$ fixed. By definition, $f_\Eps$ is the mass transport of $f_0$ along trajectories $(x_\Eps(t),v_\Eps(t))$ that satisfy the characteristic equations (see \eqref{eq:characteristics})
\begin{equation}
\begin{aligned} \label{eq:characteristics-eps}
     \frac{{\rm d} x}{\dt} & \, = v \,,
\\
     \Eps \frac{{\rm d} v}{\dt} & \, = - v -  \Grad K \ast \rho_\Eps,
\end{aligned}
\end{equation}
$(x_\Eps(0),v_\Eps(0))= (x_0,v_0)$.  We are interested in this paper in the $\Eps \to0$ limit of $\Epsf$, $\Epsrho$, as well as in the limiting behaviour of the characteristic trajectories $x_\Eps(t)$, $v_\Eps(t)$. This will require the use of the ODE framework from Section  \ref{sect:odelimit} combined with PDE estimates on  \eqref{eqn:kinetic} itself. To this end, we need to work first with classical solutions of \eqref{eqn:kinetic} and derive uniform in $\Eps$ estimates (see Theorem \ref{thm:estimate} for instance). Below is a brief account on existence theory for classical solutions of  \eqref{eqn:kinetic}.
\end{rmk}

%%%

\paragraph{Smooth solutions.} The existence of smooth solutions to~\eqref{eqn:kinetic} for $\Eps>0$ can be inferred using the classical framework for Vlasov type equations \cite{G}. We state the theorem below and explain the steps of its proof. The full details of the proof can be found in \cite[Chapter 4]{G} for Vlasov-Maxwell equation. 
%The proof also uses the characteristic equations for \eqref{eqn:kinetic}, as in the measure solutions case.

\begin{Theorem}[Existence of smooth solutions]
\label{thm:exist-smooth}
Suppose $f_0 \in C^2(\Rd \times \Rd) \cap L^1(\Rd \times \Rd)$ and $\Grad K \in W^{1, \infty}(\Rd)$. Let $T > 0$ be arbitrary. Then equation~\eqref{eqn:kinetic} has a solution $\Epsf \in C([0, T]; C^1(\Rd \times \Rd))$ with initial data  ${\Epsf}\big|_{t=0} \, = f_0$. 
\end{Theorem}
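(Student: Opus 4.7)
The plan is to follow the classical Picard iteration scheme for Vlasov equations (\cite[Chapter 4]{G}), adapted to our setting. For $\Eps>0$ fixed, equation \eqref{eqn:kinetic} is a Vlasov-type transport equation with linear friction in $v$; the factor $1/\Eps$ enters only as a constant in the characteristic field, so no unusual difficulties arise once $\Eps$ is fixed.

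First I would set up the iteration. Start with $\rho^{(0)}(t,x):=\int f_0(x,v)\dv$ and, given $\rho^{(n)}\in C([0,T];C^1(\Rd)\cap L^1(\Rd))$, define the drift field $E^{(n)}(t,x):=-\Grad K\ast\rho^{(n)}(t,\cdot)(x)$. Because $\Grad K\in W^{1,\infty}(\Rd)$ and $\rho^{(n)}$ has uniformly bounded $L^1$ norm, $E^{(n)}$ is bounded and Lipschitz in $x$ uniformly in $t$. I then solve the linear characteristic ODE
\[
\frac{dX}{dt}=V,\qquad \Eps\frac{dV}{dt}=-V+E^{(n)}(t,X),\qquad (X,V)\big|_{t=0}=(x_0,v_0),
\]
obtaining a $C^1$ flow $\CalT^{(n+1)}_t$ on $\Rd\times\Rd$, and define $f^{(n+1)}$ as the pushforward of $f_0$ under $\CalT^{(n+1)}_t$. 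Because the characteristic field has constant divergence $-d/\Eps$ in $(x,v)$, the Jacobian is explicit, yielding the pointwise formula $f^{(n+1)}(t,x,v)=e^{dt/\Eps}f_0\big((\CalT^{(n+1)}_t)^{-1}(x,v)\big)$, which inherits $C^1$ regularity from $f_0$. Finally set $\rho^{(n+1)}(t,x):=\int f^{(n+1)}(t,x,v)\dv$, keeping the iteration in its starting class.

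Next I would establish uniform-in-$n$ bounds on some short interval $[0,T_0]$. Mass conservation ($\|\rho^{(n)}(t)\|_{L^1}=\|f_0\|_{L^1}$, by change of variables under the flow) together with $\|D^2K\|_{L^\infty}<\infty$ shows that all $E^{(n)}$ share a common Lipschitz constant. A Gronwall estimate on the characteristic ODE then gives uniform control of $\CalT^{(n+1)}_t$ and of $D_{(x_0,v_0)}\CalT^{(n+1)}_t$, which transfers via the pushforward formula to uniform bounds on $\|f^{(n+1)}\|_{C^1}$ and $\|\rho^{(n+1)}\|_{C^1}$. For contraction, I would subtract two successive characteristic systems and apply Gronwall once more: writing $\delta\CalT:=\CalT^{(n+1)}_t-\CalT^{(n)}_t$, one obtains
\[
\sup_{t\in[0,T_0]}\|\delta\CalT\|_{L^\infty}\leq C\int_0^{T_0}\|E^{(n)}(s,\cdot)-E^{(n-1)}(s,\cdot)\|_{L^\infty}\ds\leq CT_0\|\Grad K\|_{L^\infty}\sup_{s}\|\rho^{(n)}(s)-\rho^{(n-1)}(s)\|_{L^1}.
\]
Combining this with Lipschitz continuity of $f_0$ and of the map $(\text{flow})\mapsto\rho$ gives a strict contraction in a suitable norm (e.g. $C([0,T_0];L^1\cap L^\infty)$ of $\rho$) once $T_0$ is chosen small enough.

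Passing to the limit then produces a classical solution $\Epsf\in C([0,T_0];C^1(\Rd\times\Rd))$, which I would extend to the full interval $[0,T]$ by iterating the local construction: the a priori bounds above depend only on $\|\Grad K\|_{W^{1,\infty}}$, $\Eps$, and $\|f_0\|_{C^2\cap L^1}$ and grow at most exponentially, so no finite-time blow-up can occur. The main obstacle is the contraction step---tracking how a small $\rho$-perturbation propagates through the convolution, the ODE flow, and the pushforward while remaining in a norm in which the iteration contracts. Once those estimates are in place, everything else reduces to direct Gronwall arguments.
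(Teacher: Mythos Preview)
Your proposal is correct and follows essentially the same approach as the paper: a Picard iteration in the style of \cite[Chapter 4]{G}, with the three steps of constructing iterates via characteristics, proving uniform $C^1$ bounds, and establishing a Cauchy/contraction property to pass to the limit. The paper only sketches these steps, while you supply more of the concrete estimates, but the strategy is identical.
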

\begin{proof}[Sketch of Proof] %Since this existence result holds for any arbitrary time, we can assume, without loss of generality, that $\Eps = 1$ and write $\Epsf$ as $f$. 

The proof is divided in three steps: construct an approximating sequence $f^{(n)}_\Eps$ in $C([0, T]; C^2 (\Rd \times \Rd))$ by iterations, prove a uniform bound of $f^{(n)}_\Eps$ in $C([0, T]; C^1(\Rd \times \Rd))$, and show that $f^{(n)}_\Eps$ is a Cauchy sequence in $C([0, T]; C^1(\Rd \times \Rd))$ which converges to the desired solution of \eqref{eqn:kinetic}. 
%We fully present the first step here and defer the details for the other two steps in the Appendix. 
\end{proof}

We end this section by pointing out that, similar to the analysis in \cite{CanizoCarrilloRosado2011} for measure-valued solutions, the classical results invoked to prove Theorem \ref{thm:exist-smooth} also use the characteristic equations \eqref{eq:characteristics-eps}. For smooth solutions however the mass transportation formula \eqref{eqn:mass-transport} is equivalent to solving the equation by the method of characteristics.

%%%%%%%%%%%%%%%%%%%%%%%%%%%%%%%%%%%%%%%%%%%%%%%%%

\section{Uniform in $\Eps$ estimates}
\label{sect:estimates}
We present in this section all the (uniform in $\Eps$) estimates needed to prove the convergence as $\Eps \to 0$ of solutions to  \eqref{eqn:kinetic}. Throughout the rest of the paper we assume $\nabla K \in W^{1,\infty}(\Rd)$ and that the initial density $f_0$ has compact support. Note that compared to \cite{CanizoCarrilloRosado2011} we require a slightly stronger condition on $K$, as for our analysis we need a global Lipschitz bound to obtain the uniform bound in $\Eps$ for the support of the solution (Proposition \ref{prop:supp}). 

For further reference, let us write the initial-value problem for \eqref{eqn:kinetic}, with explicit $\Eps$-dependence indicated for its solution $f_\Eps$:
\begin{equation}
   \label{eq:kinetic}
\begin{aligned}
    \del_t \Epsf + v \cdot \Grad \Epsf
   & \, = \frac{1}{\Eps} \Gradv \cdot \vpran{\vpran{v + \Grad K \ast \Epsrho} \Epsf},
\\
    \Epsf \big|_{t=0} &\, = f_0 (x, v),
\end{aligned}
\end{equation}
where
\[
\rho_\epsilon(t,x) = \int_{\mathbb{R}^d} f_\epsilon(t,x,v) dv.
\]

%%%%%

\paragraph{Solutions of compact support.} We first make the observation that  $f_0$ being compactly supported implies that solutions $\Epsf$ (either smooth or measure-valued) remain compactly supported for all times (see Theorem \ref{thm:exist-measure}). We show below that the support of $\Epsf$ is in fact {\em independent} of $\Eps$.

\begin{Proposition}[Uniform estimate for the support]
\label{prop:supp} 
Consider a solution $\Epsf$ to \eqref{eq:kinetic} as provided by Theorem \ref{thm:exist-measure}. Then, there exists an increasing function $R(T)$ (independent of $\Eps$) such that for all $T>0$,
\begin{equation}
\label{eqn:suppb}
\operatorname{supp }\Epsf(t) \subset B_{R(T)} \quad  \text{ for all } t \in [0,T] \text{ and } \Eps>0.
\end{equation}
The function $R(T)$ depends only on the support of $f_0$ and $\| \nabla K \|_{L^\infty}$. 
\end{Proposition}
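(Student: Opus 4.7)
By Theorem \ref{thm:exist-measure} and the definition of a measure solution, $\Epsf(t) = \mathcal{T}^{t,\Eps}_{E[\Epsf]} \# f_0$, so the support of $\Epsf(t)$ is exactly the image of $\operatorname{supp} f_0$ under the characteristic flow \eqref{eq:characteristics-eps}. Hence the task reduces to producing $\Eps$-independent bounds on the trajectories $(x_\Eps(t), v_\Eps(t))$ that originate from any $(x_0,v_0) \in \operatorname{supp} f_0$.

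The plan is to exploit the dissipative/relaxation structure of the velocity equation. Since $\Epsrho(t,\cdot)$ is a probability measure and $\nabla K \in L^\infty$, the forcing term satisfies
\[
\abs{\Grad K \ast \Epsrho(t,x)} \leq \norm{\nabla K}_{L^\infty} =: M
\]
uniformly in $t$, $x$, and $\Eps$. Multiplying the $v$-equation in \eqref{eq:characteristics-eps} by $v_\Eps$ yields
\[
\frac{\Eps}{2} \frac{d}{dt} \abs{v_\Eps}^2 = - \abs{v_\Eps}^2 - v_\Eps \cdot (\Grad K \ast \Epsrho)(t, x_\Eps(t)) \leq -\abs{v_\Eps}^2 + M \abs{v_\Eps}.
\]
Thus whenever $\abs{v_\Eps(t)} > M$, the right-hand side is strictly negative, so $\abs{v_\Eps}^2$ is decreasing. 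A standard comparison argument then gives the \emph{$\Eps$-independent} bound
\[
\abs{v_\Eps(t)} \leq \max\bigl(\abs{v_0}, M\bigr) \leq \max\bigl(V_0, \norm{\nabla K}_{L^\infty}\bigr) =: V(T),
\]
where $V_0$ bounds the velocity component of $\operatorname{supp} f_0$. Integrating the $x$-equation then gives
\[
\abs{x_\Eps(t)} \leq \abs{x_0} + \int_0^t \abs{v_\Eps(s)} \ds \leq X_0 + T \, V(T),
\]
where $X_0$ bounds the spatial component of $\operatorname{supp} f_0$. Combining the two estimates produces the desired radius $R(T)$, depending only on $\operatorname{supp} f_0$ and $\norm{\nabla K}_{L^\infty}$.

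The only subtle point is the apparent ``singular'' $1/\Eps$ factor in \eqref{eq:characteristics-eps}: naively, the velocity relaxation could be thought to cause trouble as $\Eps \to 0$. The key observation that neutralizes this concern is that the prefactor $1/\Eps$ multiplies a \emph{sign-definite} (dissipative) expression $-(v + \Grad K \ast \Epsrho)$, so faster relaxation only drives $|v|$ down more quickly toward the ball of radius $M$, rather than amplifying it. This is precisely why a global (rather than only local) Lipschitz bound on $K$ is needed here, as emphasized in the remark preceding the proposition. Once the trajectory bounds above are in hand, \eqref{eqn:suppb} follows immediately from the mass-transport identity for $\Epsf$.
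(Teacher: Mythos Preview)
Your proposal is correct and follows essentially the same route as the paper: reduce to trajectory bounds via the push-forward representation, use $|\nabla K\ast\rho_\Eps|\le\|\nabla K\|_{L^\infty}$ to get an $\Eps$-uniform bound on $|v_\Eps(t)|$, then integrate to control $|x_\Eps(t)|$. The only cosmetic difference is that the paper writes the differential inequality directly for $|v_\Eps|$ (obtaining $\tfrac{d}{dt}|v_\Eps|\le -\tfrac{1}{\Eps}|v_\Eps|+\tfrac{1}{\Eps}\|\nabla K\|_{L^\infty}$), whereas you multiply by $v_\Eps$ and work with $|v_\Eps|^2$; both yield the same bound $|v_\Eps(t)|\le\max(|v_0|,\|\nabla K\|_{L^\infty})$.
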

\begin{proof}
The support of $\Epsf$ evolves with the flow governed by the characteristic equations \eqref{eq:characteristics-eps},  initialized at points $(x_0,v_0)$ in the support of $f_0$. Since
\[
|\nabla K \ast \Epsrho| \leq \| \nabla K \|_{L^\infty},
\]
from \eqref{eq:characteristics-eps} we infer that the Euclidean norm $|v_\Eps(t)|$ of the $v$-trajectories satisfies
\[
\frac{d|v_\Eps|}{dt} \leq -\frac{1}{\Eps} |v_\Eps| + \frac{1}{\Eps}  \| \nabla K \|_{L^\infty}, \qquad v_\Eps(0)=v_0.
\]
Consequently, there exists a constant $C$ that depends only on the support of $f_0$ and $\| \nabla K \|_{L^\infty}$, such that all characteristic paths that start from within $\operatorname{supp} f_0$ satisfy
\[
|v_\Eps(t)| \leq C, \quad \text{ for all } t>0 \text{ and } \Eps>0. 
\]
Since $\frac{d|x_\Eps|}{dt} \leq |v_\Eps|$, the $x$-trajectories grow at most linearly in time. Hence there exists a function $R(T)$ that depends only on the support of $f_0$, $\| \nabla K \|_{L^\infty}$ and $T$ such that \eqref{eqn:suppb} holds.
\end{proof}

%%%%%%%%%%%%

\subsection{Estimates for smooth solutions} 
\label{subsect:est-smooth}
Consider the smooth case and take solutions $\Epsf \in C([0, T]; C^1(\Rd \times \Rd))$, as provided by Theorem \ref{thm:exist-smooth}. 
The key estimate needed for the convergence is provided by the following result.
\begin{Proposition}[Main estimate for smooth solutions]
\label{thm:estimate}
  Let $\Epsf$ be the classical solution to~\eqref{eq:kinetic}, as provided by Theorem \ref{thm:exist-smooth}. Assume additionally that the initial data $f_0$ has a finite first moment in v, that is, $|v| f_0 \in L^1(\Rd \times \Rd)$. Then
there exists a constant $C_0, \Eps_0$ such that for any $\Eps \leq \Eps_0$,
\begin{align} \label{bound:generalized-moment}
    \iint_{\R^d \times \R^d} \abs{v + \Grad K \ast \Epsrho} \Epsf\dx\dv
\leq C_0 \Eps, \quad \text{ for all } t\in [0,T],
\end{align}
where $C_0, \Eps_0$ only depends on $\norm{\Grad K}_{W^{1, \infty}}$ and $\norm{(1 + |v|) f_0}_{L^1(\Rd \times \Rd)}$. 
\end{Proposition}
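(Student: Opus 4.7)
The plan is to exploit the characteristic structure \eqref{eq:characteristics-eps}: along a trajectory $(x_\Eps(t), v_\Eps(t))$ starting at $(x_0, v_0) \in \operatorname{supp} f_0$, the second characteristic equation can be rewritten as
\begin{align*}
    v_\Eps(t) + \Grad K \ast \Epsrho(t, x_\Eps(t)) = -\Eps\, \dot v_\Eps(t),
\end{align*}
so the $\Eps$-factor is already built into the quantity we wish to bound, provided one can control $|\dot v_\Eps|$ uniformly. Setting $g_\Eps(t; x_0, v_0) := v_\Eps(t) + \Grad K \ast \Epsrho(t, x_\Eps(t))$ and differentiating along the trajectory produces the scalar ODE
\begin{align*}
    \dot g_\Eps = -\tfrac{1}{\Eps}\, g_\Eps + H_\Eps, \qquad H_\Eps := \bigl(\del_t + v \cdot \Grad\bigr)(\Grad K \ast \Epsrho)\bigl(t, x_\Eps(t)\bigr),
\end{align*}
so the proof reduces to controlling $H_\Eps$ in $L^\infty$ uniformly in $\Eps$.

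The first substantive step is the $L^\infty$ bound on $H_\Eps$. The spatial piece $v_\Eps \cdot (\nabla^2 K \ast \Epsrho)$ is controlled by $R(T)\, \norm{\nabla^2 K}_{L^\infty}$ using the uniform support estimate from Proposition \ref{prop:supp} (to bound $|v_\Eps|$) and $\int \Epsrho\, \dx = 1$. For the time piece, integrating \eqref{eq:kinetic} in $v$ produces the continuity equation $\del_t \Epsrho + \Grad \cdot j_\Eps = 0$ for the current $j_\Eps := \int v\, \Epsf\, \dv$, hence
\begin{align*}
    \del_t(\Grad K \ast \Epsrho) = \Grad K \ast \del_t \Epsrho = -\nabla^2 K \ast j_\Eps
\end{align*}
after moving the divergence onto the smooth kernel $K$; Proposition \ref{prop:supp} then gives $\norm{j_\Eps}_{L^1(\dx)} \le R(T)$, closing the bound.

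With $\norm{H_\Eps}_{L^\infty} \le C$ in hand, Duhamel's formula for the scalar ODE yields $|g_\Eps(t)| \le e^{-t/\Eps}\, |g_\Eps(0)| + \Eps\, C$. The mass transport characterization of $\Epsf$ \eqref{eqn:mass-transport} rewrites the target quantity as
\begin{align*}
    \iint \abs{v + \Grad K \ast \Epsrho}\, \Epsf\, \dx\, \dv = \iint \abs{g_\Eps(t; x_0, v_0)}\, f_0(x_0, v_0)\, \dx\, \dv,
\end{align*}
and the trivial estimate $|g_\Eps(0)| \le |v_0| + \norm{\Grad K}_{L^\infty}$ produces an upper bound depending only on $\norm{(1 + |v|)\, f_0}_{L^1}$ and $\norm{\Grad K}_{W^{1,\infty}}$.

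The principal obstacle is the uniform-in-$\Eps$ control of $\del_t(\Grad K \ast \Epsrho)$: despite the stiff $1/\Eps$ coefficient in \eqref{eq:kinetic}, this time-derivative is $\Eps$-free once one passes through the continuity equation for $\Epsrho$ and transfers the derivative onto the smooth kernel $K$. A secondary subtlety is the initial transient $e^{-t/\Eps}\, |g_\Eps(0)|$, which is $\Or(1)$ near $t = 0$ and only decays to $\Or(\Eps)$ after a boundary layer of width $\Eps \log(1/\Eps)$; this is the PDE analogue of the boundary-layer phenomenon noted in Remark \ref{remark:boundary layer} for the ODE case, and is presumably absorbed into $C_0 \Eps$ by choosing $\Eps_0$ sufficiently small, or else handled via an initial well-preparation argument implicit in the statement.
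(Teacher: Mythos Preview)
Your approach is the characteristic-curve variant of the paper's argument. Where you track $g_\Eps(t) = v_\Eps(t) + \Grad K \ast \Epsrho(t, x_\Eps(t))$ along individual trajectories and derive the scalar ODE $\dot g_\Eps = -\tfrac{1}{\Eps} g_\Eps + H_\Eps$, the paper differentiates the integrated quantity $I(t) = \iint |v + \Grad K \ast \Epsrho|\, \Epsf\, \dx\, \dv$ directly---multiplying \eqref{eq:kinetic} by $|v + \Grad K \ast \Epsrho|$ and integrating---to obtain $I' \leq -\tfrac{1}{\Eps} I + C_5 I + C_6$. These are the same computation, pointwise versus in the mean; your push-forward step via \eqref{eqn:mass-transport} is exactly what connects them. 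The one structural difference is that the paper does \emph{not} invoke Proposition~\ref{prop:supp}. Rather than bounding $|v_\Eps|$ and $\norm{j_\Eps}_{L^1}$ by $R(T)$, it writes $|v| \leq |v + \Grad K \ast \Epsrho| + \norm{\Grad K}_{L^\infty}$ (and the analogous splitting inside $\langle v \Epsf\rangle$), feeding the estimate back into $I$ itself. That feedback is what produces constants depending only on $\norm{\Grad K}_{W^{1,\infty}}$ and $\norm{(1+|v|)f_0}_{L^1}$ as the statement claims; your route through $R(T)$ picks up an additional dependence on $\operatorname{supp} f_0$ and on $T$.

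On the initial transient: you are right to flag it, but your proposed remedy---absorbing it into $C_0 \Eps$ by choosing $\Eps_0$ small---cannot work, since $I(0) = \iint |v + \Grad K \ast \rho_0|\, f_0$ is a fixed positive number independent of $\Eps$. Both your Duhamel formula and the paper's differential inequality in fact yield only $I(t) \leq I(0)\, e^{-ct/\Eps} + C\Eps$, which is $\CalO(1)$ at $t=0$ for generic (non-well-prepared) data. The paper glosses over this final step, so the uniform-in-$t$ bound as literally stated is stronger than what either argument establishes. What the downstream applications in Section~\ref{sect:conv} actually require---pointwise convergence $I(t)\to 0$ for each $t>0$, or the time-integrated bound $\tfrac{1}{\Eps}\int_0^T I(t)\,\dt \leq C$---is delivered by both proofs.
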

\begin{proof}
Denote the quantity on the left-hand side of~\eqref{bound:generalized-moment} as
\begin{align*}
   I(t) = \iint_{\R^d \times \R^d} \abs{v + \Grad K \ast \Epsrho} \Epsf\dx\dv.
\end{align*}
Hence our goal is to show that there exists $C_0$ such that
\begin{align} \label{eqn:generalized-moment}
   \sup_{t \in [0, T]} I(t) \leq C_0 \Eps.
\end{align}
Multiply equation~\eqref{eq:kinetic} by $\abs{v + \Grad K \ast \Epsrho}$ and integrate in $x, v$ to get
\begin{equation}
\label{eqn:boundR}
   \frac{\rm d}{\dt} I(t) 
 =  - \frac{1}{\Eps} I(t) 
 + \underbrace{\iint_{\R^d \times \R^d} \vpran{\del_t \abs{v + \Grad K \ast \Epsrho}}
                                         \Epsf \dx\dv}_{=I_1(t)}
 + \underbrace{\iint_{\R^d \times \R^d} 
            \vpran{v \cdot \Grad \abs{v + \Grad K \ast \Epsrho}} \Epsf \dx\dv}_{=I_2(t)}.                                      
\end{equation}
Denote the two remainder terms in the right-side as 
\begin{align*}
   I_1(t) & \, = \iint_{\R^d \times \R^d} \vpran{\del_t \abs{v + \Grad K \ast \Epsrho}} \Epsf \dx\dv,
\\
   I_2(t) & \, = \iint_{\R^d \times \R^d} 
            \vpran{v \cdot \Grad \abs{v + \Grad K \ast \Epsrho}} \Epsf \dx\dv.
\end{align*}
The strategy is to show that $I_1(t)$ and $I_2(t)$ are bounded linearly by $I(t)$ and then derive a differential inequality from \eqref{eqn:boundR} to bound $I(t)$. 

By integrating \eqref{eq:kinetic} in $v$ one finds that the macroscopic density $\Epsrho \in C([0, T]; C^1(\R^d))$ satisfies %\Blue{Make sure the regularity of $\Epsrho$ is justified.}
\begin{equation}
\label{eq:rhoeps}
\begin{aligned}
%\label{eqn:rhoeps-dyn}
   \del_t \Epsrho + \Grad \cdot & \, \vint{ v \Epsf} = 0, 
\\
   \Epsrho \big|_{t = 0} &= \vint{f_{0}}.
\end{aligned}
\end{equation}
Here, angle brackets denote integration with respect to $v$. Equation \eqref{eq:rhoeps} conserves mass:
\[
\norm{\Epsrho(t)}_{L^1(\Rd)} = \norm{f_0}_{L^1(\Rd \times \Rd)}, \qquad \text{ for all } t>0.
\]

Hence,
\begin{align*}
    \big| \del_t \abs{v + \Grad K \ast \Epsrho} \big|
& \leq 
    \abs{\Grad K \ast \del_t \Epsrho}
= \abs{\Grad^2 K \ast \vint{v \Epsf}}
\\
& \leq
  \abs{\Grad^2 K \ast \vint{(v + \Grad K \ast \Epsrho) \Epsf}}
  + \norm{\Grad^2 K \ast \Epsrho}_{L^\infty} \norm{\Grad K \ast \Epsrho}_{L^\infty}
\\
& \leq
  \norm{\Grad^2 K}_{L^\infty} I(t)
  + \norm{\Grad^2 K}_{L^\infty} 
      \norm{\Grad K}_{L^\infty}    
      \norm{\Epsrho}_{L^1(\Rd)}^2.
\end{align*}
%where $C_{1,1}$ only depends on $\norm{\Grad^2 K}_{L^\infty(\dx)}$ and $\norm{f_0}_{L^1(\dx)}$.  
Therefore, 
%\begin{equation}
%\label{est:R1}
\begin{align}
   \abs{I_1(t)}
& \leq 
   \norm{\Grad^2 K}_{L^\infty} \norm{\Epsrho}_{L^1(\Rd)} I(t) 
   + \norm{\Grad^2 K}_{L^\infty} 
      \norm{\Grad K}_{L^\infty}    
      \norm{\Epsrho}_{L^1(\Rd)}^3 \nonumber
\\
& \leq
   C_{1} I(t) + C_{2}, \label{eqn:estR1}
\end{align}
%\end{equation}
where $C_{1}, C_{2}$ only depend on $\norm{\Grad K}_{W^{1, \infty}}$ and $\norm{f_0}_{L^1(\Rd \times \Rd)}$. 

A similar estimate can be derived for $I_2(t)$. Indeed, 
\begin{align}
   |I_2(t)| 
& \leq 
   \iint_{\R^d \times \R^d}
      |v| \abs{\Grad^2 K \ast \Epsrho} \Epsf \dx\dv \nn
\\
& \leq
   \norm{\Grad^2 K}_{L^\infty} \norm{\Epsrho}_{L^1(\Rd)}
   \iint_{\R^d \times \R^d}
      |v| \Epsf \dx\dv \label{eqn:estR2}
\\
& \leq
   C_{3}  \Biggl(  \;\,  \iint_{\R^d \times \R^d}
      |v + \Grad K * \Epsrho| \Epsf \dx\dv
      + \norm{\Grad K}_{L^\infty} \norm{\Epsrho}_{L^1(\Rd)}^2 \Biggr) \nonumber
\\
& \leq
   C_{3} I(t)  + C_{4}, \nn
\end{align}
where $C_{3}, C_{4}$ only depend on $\norm{\Grad K}_{W^{1, \infty}}$ and $\norm{f_0}_{L^1(\Rd \times \Rd)}$. 

Combining \eqref{eqn:boundR}, \eqref{eqn:estR1} and \eqref{eqn:estR2} we obtain the following differential inequality for $I(t)$: 
\begin{align} \label{eq:R}
   \frac{\rm d}{\dt} I(t) 
 \leq & \, - \frac{1}{\Eps} I(t) 
 + C_{5} I(t) + C_{6},                                 
\end{align}
where $C_{5} = C_{1} + C_{3}$ and $C_{6} = C_{2} + C_{4}$, both of which depending only on $\norm{\Grad K}_{W^{1, \infty}}$ and $\norm{f_0}_{L^1(\Rd \times \Rd)}$. 

Note that initially,
\begin{equation}
\begin{aligned} \label{cond:R-initial}
   I(0) 
%&   = \iint_{\R^d \times \R^d} \abs{v + \Grad K \ast \Epsrho} f_{0}\dx\dv \\
& \leq \norm{|v| f_0}_{L^1(\Rd \times \Rd)} + C_{7},
\end{aligned}
\end{equation}
where $C_{7}$ only depends on $\norm{\Grad K}_{L^\infty}$ and $\norm{f_0}_{L^1(\Rd \times \Rd)}$.  Finally, from ~\eqref{eq:R} and \eqref{cond:R-initial} one can derive
\begin{align*}
   \sup_{t \in [0, T]} I(t) \leq C_0 \Eps,
\end{align*}
where $C_0$ only depends on $\norm{\Grad K}_{W^{1, \infty}}$ and $\norm{(1 + |v|) f_0}_{L^1(\Rd \times \Rd)}$. 
\end{proof}

%%%%%%%%%%%%%%%%%

\subsection{Estimates for measure-valued solutions}
\label{subsect:est-measure}

Next, we show that a similar bound as in~\eqref{bound:generalized-moment} holds for a measure-valued solution $\Epsf$ as well. The strategy we employ here is to use a smooth approximating sequence for which the results in Section \ref{subsect:est-smooth} are valid, and then pass  to the limit to infer results for measure solutions.

Take an initial measure $f_0 \in \mathcal{P}_1(\Rd \times \Rd)$ with compact support and fix $\Eps>0$.  Let $f_{0}^{(n)}$ be a sequence of mollifications of $f_0$ such that
\begin{equation}
\label{eqn:f0moll}
    f_{0}^{(n)} = f_0 \ast \eta^{(n)} \in C^2(\Rd \times \Rd).
\end{equation}
Here the mollifier can be chosen such that $\eta^{(n)}(x, v) = n^{2d}\eta^{(1)}(nx, nv) \in C^\infty_c(\R^d \times \R^d)$, where $\eta^{(1)}$ is compactly supported over the unit ball in $\R^{2d}$ and satisfies
\begin{align*}
   \eta^{(1)} \geq 0,
\qquad
    \iint_{\R^d \times \R^d} \eta^{(1)}(x, v) \dx\dv = 1,
\qquad
    \iint_{\R^d \times \R^d} |v| \eta^{(1)}(x, v) \dx\dv \leq 1.
\end{align*}
%with $supp \, \eta^{(1)} \in B_0(1)$ where $B_0(1)$ is the unit ball in $\R^{2d}$. 

The following mollification lemma is classical (see for example \cite{Amb2003}).
\begin{Lemma}
\label{lemma:f0n}
Suppose $f_0 \in \CalP_1(\R^d \times \R^d)$ with $supp \, f_0 \subseteq B(R_0)$. Then the approximating sequence $f_{0}^{(n)}$ satisfies
\begin{itemize}
\item[(a)] $supp \, f_0^{(n)} \subseteq B(R_0+1)$ for all $n \geq 1$.
\item[(b)] $f_{0}^{(n)} \in \CalP_1(\Rd \times \Rd)$ and the first moments $\iint  |v| f_{0}^{(n)}(x,v) dx dv$ are uniformly bounded.
\item[(c)] $\{f_{0}^{(n)}\}_{n\geq 1}$ is a Cauchy sequence in $\CalP_1(\Rd \times \Rd)$ endowed with the $W_1$ distance, and $\| f_{0}^{(n)}-f_0 \|_{W_1} \to 0$  as $n\to \infty$.
\end{itemize}
\end{Lemma}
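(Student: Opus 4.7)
The plan is to verify (a)--(c) by elementary mollification estimates, each of which reduces to tracking how far mass can be displaced by convolution against $\eta^{(n)}$, whose support sits in the closed ball $\overline{B(1/n)} \subset \R^d \times \R^d$.

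For (a), I would simply invoke the standard inclusion $\operatorname{supp}(f_0 \ast \eta^{(n)}) \subseteq \operatorname{supp} f_0 + \operatorname{supp} \eta^{(n)}$, which immediately yields the desired containment in $B(R_0 + 1/n) \subseteq B(R_0 + 1)$ for every $n \geq 1$.

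For (b), the plan is a direct Fubini computation. After rewriting the first moment of $f_0^{(n)}$ as a quadruple integral against the joint kernel $f_0(\tilde x, \tilde v) \eta^{(n)}(x - \tilde x, v - \tilde v)$, I would change variables $(y, w) = (x - \tilde x, v - \tilde v)$ and split $|\tilde v + w| \leq |\tilde v| + |w|$. The estimate then separates into two pieces, and the piece $\iint |w| \eta^{(n)}(y, w) \dy \dw$ is bounded by $1/n$ using the scaling $\eta^{(n)}(y, w) = n^{2d} \eta^{(1)}(ny, nw)$ together with the hypothesis $\iint |v| \eta^{(1)}(x, v) \dx \dv \leq 1$. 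The net bound
\[
\iint |v| f_0^{(n)}(x, v) \dx \dv \leq \iint |v| f_0(x, v) \dx \dv + \frac{1}{n}
\]
is uniform in $n$ and, together with mass preservation, places each $f_0^{(n)}$ in $\CalP_1(\Rd \times \Rd)$.

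For (c), the plan is to prove the quantitative estimate $W_1(f_0^{(n)}, f_0) \leq 2/n$, from which both the Cauchy property and convergence to $f_0$ follow at once. The cleanest route is to exhibit the explicit transport plan that sends $(\tilde x, \tilde v) \sim f_0$ to $(\tilde x + y, \tilde v + w)$ with $(y, w) \sim \eta^{(n)}$ independent of $(\tilde x, \tilde v)$; this plan has marginals $f_0$ and $f_0^{(n)}$, and its cost equals $\iint (|y| + |w|) \eta^{(n)}(y, w) \dy \dw \leq 2/n$ by the same scaling as in (b). Alternatively, I could use Kantorovich--Rubinstein duality, testing against $1$-Lipschitz $\phi$ and bounding $|\phi(\tilde x + y, \tilde v + w) - \phi(\tilde x, \tilde v)| \leq |y| + |w|$ uniformly on $\operatorname{supp} \eta^{(n)}$. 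I do not anticipate any serious obstacle here --- all three items are classical --- the only care required is to keep the mollifier scaling explicit so that the first-moment bound and the Wasserstein estimate come out with the correct $1/n$ rate.
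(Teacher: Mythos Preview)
Your proposal is correct; each of (a)--(c) is handled by the standard mollification argument you outline, and the explicit $1/n$ rates you extract from the scaling of $\eta^{(n)}$ are right. The paper itself gives no proof of this lemma --- it simply states it as classical and cites \cite{Amb2003} --- so there is nothing to compare your argument against; what you have written is precisely the routine verification one would expect.
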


Now we construct the approximating sequence $\Epsfn$ such that
\begin{equation}
\begin{aligned} \label{eq:transport-eps-n}
    \del_t \Epsfn + v \cdot \Grad \Epsfn
   & \, = \frac{1}{\Eps} \Gradv \cdot \vpran{\vpran{v + \Grad K \ast \Epsrhon} \Epsfn},
\\
    \Epsfn \big|_{t=0} &\, = f_{0}^{(n)} (x, v),
\end{aligned}
\end{equation}
where $\Epsrhon = \int_{\R^d} \Epsfn \dv$. 

\begin{Lemma} \label{lem:f-n-Eps}
Fix $\Eps > 0$. Suppose $\Grad K \in W^{1, \infty}(\Rd)$ and $f_0 \in \CalP_1(\R^d \times \R^d)$ with compact support. Let $f_0^{(n)}$ be the sequence of mollifications of $f_0$ given by~\eqref{eqn:f0moll}. Then for each $T > 0$, there exists a sequence of solutions $\Epsfn \in C([0, T); C^1(\R^d \times \R^d))$ to~\eqref{eq:transport-eps-n} whose supports only depend on $T$ and $\norm{\Grad K}_{W^{1, \infty}}$. In particular, the supports are uniformly bounded in both $n$ and $\Eps$. 

Moreover,  if we let $f_\Eps \in C([0, T), \CalP_1(\R^d \times \R^d))$ be the unique measure solution to~\eqref{eq:kinetic} in the sense of Definition~\ref{defn:sol}, then
\begin{equation}
 \label{convergence:1}
  \Epsfn(t, \cdot, \cdot) \stackrel{W_1}{\longrightarrow} \Epsf(t, \cdot, \cdot) 
  %\Epsfn(t, \cdot, \cdot) \to \Epsf(t, \cdot, \cdot) 
\quad \text{ in }  \CalP_1(\Rd \times \Rd), \quad \text{ uniformly in } t \text{ as } n \to \infty.
\end{equation}
\end{Lemma}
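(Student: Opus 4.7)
The plan breaks into three stages, each leveraging a previously established result.

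First, for the existence and uniform support of $\Epsfn$, I would apply Theorem~\ref{thm:exist-smooth} directly. By construction and Lemma~\ref{lemma:f0n}(a), each $f_0^{(n)}$ lies in $C^2(\Rd\times\Rd)\cap L^1(\Rd\times\Rd)$ with support contained in $B(R_0+1)$, so the theorem produces a classical solution $\Epsfn\in C([0,T];C^1(\Rd\times\Rd))$ of~\eqref{eq:transport-eps-n}. Since smooth solutions satisfy the characteristic ODEs~\eqref{eq:characteristics-eps} pointwise, $\Epsfn$ coincides with the push-forward $\CalT^{t,\Eps}_{E[\Epsfn]}\# f_0^{(n)}$ and hence, by the uniqueness part of Theorem~\ref{thm:exist-measure}, is also the unique measure solution of~\eqref{eq:transport-eps-n} with datum $f_0^{(n)}$. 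Proposition~\ref{prop:supp} applied with the $n$-independent radius $R_0+1$ then yields $\operatorname{supp}\Epsfn(t)\subset B_{R(T)}$ uniformly in $n$ and $\Eps$, with $R(T)$ depending only on $R_0+1$, $\norm{\nabla K}_{L^\infty}$, and $T$.

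Second, for the $W_1$ convergence~\eqref{convergence:1} I would use the mass-transport representations $\Epsfn(t)=\CalT^{t,\Eps}_{E[\Epsfn]}\# f_0^{(n)}$ and $\Epsf(t)=\CalT^{t,\Eps}_{E[\Epsf]}\# f_0$. Since $\nabla K\in W^{1,\infty}(\Rd)$, from $E[f]=-\nabla K\ast\rho$ and Kantorovich--Rubinstein duality on the first marginal one has
\[
\norm{E[\Epsfn](s,\cdot)-E[\Epsf](s,\cdot)}_{L^\infty} \leq \norm{\nabla^2 K}_{L^\infty}\, W_1(\Epsfn(s),\Epsf(s)).
\]
The triangle inequality splits
\[
W_1(\Epsfn(t),\Epsf(t)) \leq W_1\bigl(\CalT^{t,\Eps}_{E[\Epsfn]}\# f_0^{(n)},\ \CalT^{t,\Eps}_{E[\Epsfn]}\# f_0\bigr) + W_1\bigl(\CalT^{t,\Eps}_{E[\Epsfn]}\# f_0,\ \CalT^{t,\Eps}_{E[\Epsf]}\# f_0\bigr);
\]
the first term is controlled by $\operatorname{Lip}\bigl(\CalT^{t,\Eps}_{E[\Epsfn]}\bigr)\,W_1(f_0^{(n)},f_0)$ via Gronwall on~\eqref{eq:characteristicsE}, while the second is bounded by $C(T,\Eps)\int_0^t W_1(\Epsfn(s),\Epsf(s))\ds$ obtained by comparing characteristic trajectories launched from the same initial point under the two different fields. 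A final Gronwall inequality produces $\sup_{t\in[0,T]}W_1(\Epsfn(t),\Epsf(t))\leq C'(T,\Eps)\,W_1(f_0^{(n)},f_0)$, and Lemma~\ref{lemma:f0n}(c) closes the argument.

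The main obstacle is not the Gronwall step itself, which essentially reproduces the contraction mechanism of~\cite{CanizoCarrilloRosado2011}, but the bookkeeping required to verify that every constant appearing in the stability estimate depends only on $T$, $\Eps$, $\norm{\nabla K}_{W^{1,\infty}}$, and the uniform support radius $R(T)$ from the first stage, and is therefore independent of $n$. The $n$-independent support bound from Stage~1 is precisely what makes this possible, since it lets one treat $E[\Epsfn]$ and $E[\Epsf]$ as arising from probability measures supported on the common ball $B_{R(T)}$. Note that the resulting flow-map Lipschitz constants grow like $\exp(Ct/\Eps)$, so this estimate is not uniform as $\Eps\to 0$; however $\Eps$ is fixed in the present lemma, and uniformity in $\Eps$ is handled separately in the next section.
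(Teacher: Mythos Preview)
Your proposal is correct and follows essentially the same route as the paper: existence of $\Epsfn$ via Theorem~\ref{thm:exist-smooth}, uniform support control via Proposition~\ref{prop:supp} together with Lemma~\ref{lemma:f0n}(a), and then a $W_1$-stability estimate combined with Lemma~\ref{lemma:f0n}(c). The only difference is that for the convergence step the paper simply invokes the stability result \cite[Theorem~3.16]{CanizoCarrilloRosado2011} as a black box, whereas you outline its proof via the flow-map comparison and Gronwall; your version is more self-contained (and more honest about the $\Eps$-dependence of the constant), but the underlying mechanism is identical.
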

\begin{proof}
%First note that since $f_0$ is a compactly supported probability measure and the mollifier $\eta^{(n)}$ has compact support which preserves mass, we have
Since $f_0^{(n)} \in C^2(\R^d \times \R^d)$ has compact support, %\[
%f_0^{(n)} \text{ has compact support} \quad \text{ and }\quad  \| f_0^{(n)}\|_{L^1(\Rd \times \Rd)} = 1, \quad \text{ for all } n\geq1. 
%\]
we can apply the existence theory in Theorem \ref{thm:exist-smooth} and deduce that there exists a smooth solution $\Epsfn \in C([0, T]; C^1(\R^d \times \R^d))$  of \eqref{eq:transport-eps-n} for every $n \geq 1$ and every $\Eps > 0$. Each $\Epsfn$ is compactly supported and integrates to $1$. %(as the PDE conserves mass). 
Proposition~\ref{prop:supp} yields that the support of $\Epsfn$ is independent of $\Eps$ and depends only on $T$, $\norm{\Grad K}_{W^{1, \infty}}$, and the support of $f_0^{(n)}$. %Moreover, %from the convergence in Lemma \ref{lemma:f0n}, 
%the definition of $f_0^{(n)}$ in~\eqref{eqn:f0moll}
%guarantees that the supports of $f_0^{(n)}$ are included, for all $n$, in a ball $B_0\subset \Rd \times \Rd$ with $\operatorname{supp} f_0 \subset B_0$. 
By part (a) in Lemma~\ref{lemma:f0n}, we further conclude that the supports of $\Epsfn(t, \cdot, \cdot)$ are uniformly bounded in {\em both} $n$ and $\Eps$ for all $t \in [0,T]$.

Let $f_\Eps \in C([0, T); \CalP_1(\R^d \times \R^d))$ be the unique measure-valued solution to~\eqref{eq:kinetic} in the sense of Definition \ref{defn:sol}. By the stability result in \cite[Theorem 3.16]{CanizoCarrilloRosado2011}, we have that for all times $t \geq0$,
\begin{align} \label{bound:stability}
    \bigl \|\Epsfn(t, \cdot, \cdot) - f_\Eps(t, \cdot, \cdot) \bigr \|_{W_1} 
\leq
    r(T) \bigl \| f_0^{(n)} - f_0 \bigr \|_{W_1},
  \end{align}
where $r(T)$ %is a strictly increasing function that 
only depends on $T$ and the support of $f_0$. %which in turn only depends on the support of $f_0$. %Given the observation from the previous paragraph, $r(t)$ can be chosen to depend only on the support of $f_0$.
From  \eqref{bound:stability} and Lemma \ref{lemma:f0n} we then conclude \eqref{convergence:1}. %$\{\Epsfn(t)\}_{n\geq 1}$ is a Cauchy sequence in $\CalP_1(\Rd\times \Rd)$ endowed with the $W_1$ distance.%, which is a complete metric space. 
%We conclude that for each $t \geq 0$, there exists $\Epsf \in \CalP_1(\Rd\times \Rd)$ such that 
%\begin{equation}
% \label{convergence:1}
%  \Epsfn(t, \cdot, \cdot) \stackrel{W_1}{\longrightarrow} \Epsf(t, \cdot, \cdot) 
%  %\Epsfn(t, \cdot, \cdot) \to \Epsf(t, \cdot, \cdot) 
%\qquad \text{ in } \CalP_1(\Rd \times \Rd), \quad \text { as } n \to \infty,
%\end{equation}
%where by~\eqref{bound:stability}, the convergence is uniform on $[0, T)$.
%In particular, $\Epsf  \big|_{t=0} = f_{0}$.
%
%\Blue{RF: What convergence for $\Epsrhon$ can be derived from here? In any case,  $\Epsrho$ needs to be at least defined or mentioned, as it is used in the lemma below.}
%
%Note that $f_\Eps \in C([0, T), \CalP_1(\R^d \times \R^d))$. Indeed, for each $t_1,t_2 \geq 0$,
%\begin{align*}
%     \norm{f_\Eps(t_2) - f_\Eps(t_1)}_{W_1}
%&\leq
%     \norm{f_\Eps(t_2) - \Epsfn(t_2)}_{W_1}
%     +\norm{\Epsfn(t_2) - \Epsfn(t_1)}_{W_1}
%\\
%& \quad 
%     + \norm{\Epsfn(t_1)-\Epsfm(t_1)}_{W_1} + \norm{\Epsfm(t_1)-f_\Eps(t_1)}_{W_1},
%\end{align*}
%\begin{align*}
%     \norm{f_\Eps(t) - f_0}_{W_1(\R^d \times \R^d)}
%&\leq
%     \norm{f_\Eps(t) - f_{\Eps, n}(t)}_{W_1(\R^d \times \R^d)}
%     +\norm{f_{\Eps, n}(t) - f_{n, 0}}_{W_1(\R^d \times \R^d)}
%\\
%& \quad \,
%     + \norm{f_{n, 0} - f_0}_{W_1(\R^d \times \R^d)}. 
%\end{align*}
%where the right-hand side approaches zero as $t \to 0$ and $n \to \infty$. 
%and the right-hand-side approaches $0$ as $t_2 \to t_1$ and $m,n \to \infty$.
\end{proof}
%\begin{rmk} 
%\label{rmk:unique-rho}
%From the existence and uniqueness result provided by Theorem \ref{thm:exist-measure},
%\cite[Theorem 3.10]{CanizoCarrilloRosado2011}
%we conclude that for each $\Eps>0$,  $f_\Eps$ obtained by \eqref{convergence:1} is in fact the unique solution of  the transport equation~\eqref{eq:kinetic} in the sense of Definition \ref{defn:sol}. 
%\Blue{RF: It is still not clear to me why this statement is so obvious. It should be expanded a little bit I think.}
% In the above, it only makes sense if it is argued that this limiting $f_\Eps$ is the same as $f_\Eps=$the push forward of $f_0$ as in CCR.}
%\WR{This is true by the uniqueness shown in CCR and the concept of solution here is the one in CCR.}
%\end{rmk}

It follows from \eqref{convergence:1} that 
\begin{equation}
\label{convergence:2}
 \rho_\Eps^{(n)}(t, \cdot, \cdot) \longrightarrow \rho_\Eps(t, \cdot, \cdot) \quad \text{ as measures}, \quad \text{ for each } t \in [0, T) \text{ as } n\to \infty,
\end{equation}
where $\rho_\Eps$ is the first marginal of $f_\Eps$. Next we show the following $L^\infty$-convergence of $\Grad K \ast \Epsrhon$.
\begin{Lemma} 
\label{lemma:1} 
Consider the measure solution $\Epsf$ of \eqref{eq:kinetic} obtained as a limit of mollifications $\Epsfn$ as in Lemma~\ref{lem:f-n-Eps}.
%Take $\Grad K \in W^{1, \infty}(\Rd)$, $f_0 \in \mathcal{P}_1(\Rd \times \Rd)$ with compact support, $\Eps>0$ fixed and $\Epsf$ obtained from \eqref{convergence:1}, as a limit of smooth approximations. 
Then for each $t \geq 0$, we have
$\Grad K \ast \Epsrho\in C(\R^d)$ and 
\begin{align*}
    \Grad K \ast \Epsrhon(t, \cdot) \longrightarrow \Grad K \ast \rho_\Eps(t, \cdot)
\qquad \text{strongly in } L^\infty_{loc}(\R^d), \text{ as } n \to \infty.
\end{align*}
\end{Lemma}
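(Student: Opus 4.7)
The plan is to leverage two facts: the global Lipschitz regularity of $\Grad K$, which follows from the standing assumption $\Grad K \in W^{1,\infty}(\R^d)$ with Lipschitz constant $\|\Grad^2 K\|_{L^\infty}$, and the Wasserstein convergence of the first marginals of $\Epsfn(t)$ to those of $\Epsf(t)$, inherited from \eqref{convergence:1}. Both the continuity of $\Grad K \ast \Epsrho$ and the $L^\infty$-type convergence will then follow from Kantorovich--Rubinstein duality paired with the uniform Lipschitz bound on $\Grad K$.

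First I would observe that $\Grad K \ast \Epsrho(t,\cdot)$ is Lipschitz (in particular continuous): for any $x_1, x_2 \in \R^d$, since $\Epsrho(t)$ is a probability measure,
\[
\bigl| \Grad K \ast \Epsrho(t,x_1) - \Grad K \ast \Epsrho(t,x_2) \bigr|
\leq \int_{\R^d} \bigl| \Grad K(x_1 - y) - \Grad K(x_2 - y) \bigr| \, \Epsrho(t, y) \, dy
\leq \|\Grad^2 K\|_{L^\infty} |x_1 - x_2|.
\]
Next, the weak-$^*$ convergence \eqref{convergence:2} can be upgraded to $W_1$-convergence of the first marginals. Since the projection $\pi \colon (x, v) \mapsto x$ is $1$-Lipschitz, the standard Wasserstein bound under push-forwards yields
\[
W_1\bigl(\Epsrhon(t), \Epsrho(t)\bigr) \leq W_1\bigl(\Epsfn(t), \Epsf(t)\bigr),
\]
and by \eqref{convergence:1} the right-hand side tends to zero uniformly in $t \in [0, T]$.

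The main estimate then follows by fixing $x \in \R^d$ and noting that $y \mapsto \Grad K(x - y)$ is Lipschitz in $y$ with constant $\|\Grad^2 K\|_{L^\infty}$, \emph{independently} of $x$. Kantorovich--Rubinstein duality (using the paper's convention of writing measure integration as $\int \cdot \, dy$) gives
\[
\bigl| \Grad K \ast \Epsrhon(t, x) - \Grad K \ast \Epsrho(t, x) \bigr|
= \left| \int_{\R^d} \Grad K(x-y) \bigl[ \Epsrhon(t, y) - \Epsrho(t, y) \bigr] dy \right|
\leq \|\Grad^2 K\|_{L^\infty} \, W_1\bigl(\Epsrhon(t), \Epsrho(t)\bigr).
\]
Since the right-hand side is independent of $x$, taking the supremum over $x \in \R^d$ yields global uniform convergence on $\R^d$, which is (more than) enough for the required $L^\infty_{loc}$ statement.

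I do not anticipate any serious obstacle. The key conceptual point is the recognition that the $W_1$ distance is precisely tailored to testing against Lipschitz functions, and the assumption $\Grad K \in W^{1,\infty}$ supplies exactly such a function with an $x$-independent Lipschitz constant. The only auxiliary check is that $W_1$-convergence of joint distributions descends to the first marginals, which is immediate from the $1$-Lipschitz character of the coordinate projection.
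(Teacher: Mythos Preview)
Your proof is correct and takes a genuinely different route from the paper's. The paper argues via compactness: it shows that the family $\{\Grad K \ast \Epsrhon\}_{n}$ is uniformly bounded by $\norm{\Grad K}_{L^\infty}$ and equi-Lipschitz with constant $\norm{\Grad^2 K}_{L^\infty}$, applies Ascoli--Arzel\`a to extract a locally uniformly convergent subsequence, and then uses the weak convergence \eqref{convergence:2} to identify the limit as $\Grad K \ast \Epsrho$, concluding that the full sequence converges. Your argument, by contrast, is direct and quantitative: by pushing the $W_1$-convergence of $\Epsfn$ down to the first marginals via the $1$-Lipschitz projection and invoking Kantorovich--Rubinstein duality against the Lipschitz test function $y \mapsto \Grad K(x-y)$, you obtain the explicit pointwise bound $\norm{\Grad K \ast \Epsrhon(t,\cdot) - \Grad K \ast \Epsrho(t,\cdot)}_{L^\infty(\R^d)} \leq \norm{\Grad^2 K}_{L^\infty} W_1(\Epsrhon(t), \Epsrho(t))$. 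This yields global (not merely local) uniform convergence, together with a rate, and even uniformity in $t$ inherited from \eqref{convergence:1}. The paper's compactness approach is more classical and does not rely on the dual formulation of $W_1$, but your approach is shorter and extracts more information from the available $W_1$-convergence.
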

\begin{proof}
Given the regularity of $\Epsfn$ we have that $\Grad K \ast \Epsrhon(t,\cdot) \in C(\R^d)$ for each time $t \geq 0$. Also, by mass conservation of $\Epsfn$  and the properties of the mollifiers in Lemma~\ref{lemma:f0n}, $\Grad K \ast \Epsrhon$ satisfies 
\[
   |\Grad K \ast \Epsrhon(x) | \leq  \norm{\Grad K}_{L^\infty},
\]
\[
   |\Grad K  \ast \Epsrhon (x_1) - \Grad K  \ast \Epsrhon(x_2)| \leq \norm{\Grad^2 K}_{L^\infty} |x_1 - x_2|,
\]
for any $x, x_1, x_2 \in \R^d$. Hence the sequence $\{ \Grad K \ast \Epsrhon \}_{n \geq 1}$ is uniformly bounded and equi-continuous.
By Ascoli-Arzel\'{a} theorem, we have that $\{ \Grad K \ast \Epsrhon \}_{n\geq 1}$ converges on a subsequence in the strong topology of $C(\Omega)$, for any compact set $\Omega \subset\R^d$. Meanwhile, by \eqref{convergence:2}, the limit function is $\Grad K \ast \rho_\Eps \in C(\Rd)$. % and this limit is unique. 
It then follows that the entire sequence $\{ \Grad K \ast \Epsrhon \}_{n\geq 1}$ converges to $\Grad K \ast \rho_\Eps$, as desired.
\end{proof}

Now we state the analogue for measure solutions of the main estimate \eqref{bound:generalized-moment}.
\begin{Proposition}[Main estimate for measure-valued solutions]
\label{thm:bound-f-eps}
Fix $\Eps > 0$ and assume the hypotheses in Lemma \ref{lem:f-n-Eps}. %, fix $\Eps>0$ and consider $\Epsf$ obtained from \eqref{convergence:1}.
%is the measure-valued solution to~\eqref{eq:kinetic} obtained in Theorem \ref{thm:exist-measure}. 
Then for any $\tilde\phi \in C_b(\R^d \times \R^d)$,
there exists a constant $\tilde{C}_0$ such that 
\begin{align} \label{bound:generalized-moment-f-eps}
    \Biggl | \; \,  \iint_{\R^d \times \R^d} \tilde\phi(x, v) \vpran{v + \Grad K \ast \Epsrho} \Epsf\dx\dv \Biggr |
\leq \tilde{C}_0 \Eps, \quad \text{ for all } t \in [0,T].
\end{align}
Specifically, $\tilde{C}_0 = C_{0} \| \tilde\phi \|_{L^\infty(\R^d \times \R^d)}$, where $C_{0}$ is a constant which depends only on $\norm{\Grad K}_{W^{1, \infty}}$ and $\iint |v| f_0 \, dx dv$. 
In particular, $\tilde C_0$ is independent of $\Eps$ and $t$.
\end{Proposition}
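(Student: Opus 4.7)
The plan is to reduce to the smooth case by applying Proposition~\ref{thm:estimate} to the mollified approximating sequence $\Epsfn$ from Lemma~\ref{lem:f-n-Eps} and then pass to the limit $n \to \infty$ using the convergence results in Lemma~\ref{lem:f-n-Eps} and Lemma~\ref{lemma:1}. First I would verify that Proposition~\ref{thm:estimate} applies to each $\Epsfn$: the regularity is provided by Lemma~\ref{lem:f-n-Eps}, and by part~(b) of Lemma~\ref{lemma:f0n} the first $v$-moments $\norm{(1+|v|) f_0^{(n)}}_{L^1}$ are bounded uniformly in $n$. This produces a constant $C_{0}$, depending only on $\norm{\Grad K}_{W^{1,\infty}}$ and $\sup_n \norm{(1+|v|) f_0^{(n)}}_{L^1}$, such that
\[
\iint_{\R^d \times \R^d} \abs{v + \Grad K \ast \Epsrhon} \Epsfn \dx\dv \leq C_{0} \Eps
\qquad \text{for every } t \in [0,T] \text{ and every } n.
\]
Multiplying the integrand by $\tilde\phi$ and bounding $|\tilde\phi|$ by $\norm{\tilde\phi}_{L^\infty}$ yields the analogue of \eqref{bound:generalized-moment-f-eps} along the approximating sequence, with the uniform constant $\tilde C_0 = C_0 \norm{\tilde\phi}_{L^\infty}$.

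To pass to the limit inside the integral, I would exploit the fact that, by Proposition~\ref{prop:supp} and Lemma~\ref{lem:f-n-Eps}, the supports of $\Epsfn(t, \cdot, \cdot)$ and $\Epsf(t, \cdot, \cdot)$ all lie inside a common ball $B_R \subset \R^{2d}$, with $R$ independent of $n$ and $\Eps$. I would then fix a cutoff $\chi \in C_c(\R^{2d})$ with $\chi \equiv 1$ on $B_R$; inserting $\chi$ into the integrand changes nothing. Setting
\[
g_n(x,v) := \chi(x,v) \tilde\phi(x,v) \bigl(v + \Grad K \ast \Epsrhon(x)\bigr), \qquad g(x,v) := \chi(x,v) \tilde\phi(x,v) \bigl(v + \Grad K \ast \Epsrho(x)\bigr),
\]
both $g_n$ and $g$ are continuous and compactly supported in $\R^{2d}$, and Lemma~\ref{lemma:1} together with the continuity and boundedness of $\tilde\phi$ gives $g_n \to g$ uniformly on $\R^{2d}$.

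The main technical point is then to combine this uniform convergence of the integrands with the $W_1$-convergence $\Epsfn \to \Epsf$ from Lemma~\ref{lem:f-n-Eps}. I would use the standard splitting
\[
\iint g_n \Epsfn \dx\dv - \iint g \Epsf \dx\dv
= \iint (g_n - g) \Epsfn \dx\dv + \iint g \,(\Epsfn - \Epsf) \dx\dv,
\]
in which the first term is controlled by $\norm{g_n - g}_{L^\infty}$ (since $\Epsfn$ is a probability measure) and vanishes as $n \to \infty$, while the second term vanishes because $g \in C_c(\R^{2d}) \subset C_b(\R^{2d})$ and $W_1$-convergence of probability measures with uniformly bounded supports implies weak-$^\ast$ convergence against continuous bounded test functions. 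Passing to the limit in the inequality for $\Epsfn$ then produces~\eqref{bound:generalized-moment-f-eps} with $\tilde C_0 = C_{0} \norm{\tilde\phi}_{L^\infty}$, as claimed.
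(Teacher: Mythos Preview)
Your proposal is correct and follows essentially the same route as the paper: apply the smooth estimate of Proposition~\ref{thm:estimate} to the mollified approximants $\Epsfn$ with a constant $C_0$ independent of $n$ (via Lemma~\ref{lemma:f0n}), and then pass to the limit $n\to\infty$ using the $W_1$-convergence of $\Epsfn$ from Lemma~\ref{lem:f-n-Eps} together with the $L^\infty_{loc}$-convergence of $\Grad K \ast \Epsrhon$ from Lemma~\ref{lemma:1}, exploiting the common compact support $\Omega(T)$. Your explicit cutoff $\chi$ and the splitting of $\iint g_n \Epsfn - \iint g \Epsf$ are just a more detailed write-up of the limiting step that the paper performs in one line.
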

\begin{proof}
Let $\Epsfn$ be the approximating sequence satisfying~\eqref{eq:transport-eps-n}. Hence $\Epsfn$ satisfies~\eqref{bound:generalized-moment} and for any $\tilde\phi \in C_b(\R^d \times \R^d)$,
\begin{align}  \label{bound:generalized-moment-1}
    \Bigl | \iint_{\R^d \times \R^d} \tilde\phi(x, v) \vpran{v + \Grad K \ast \Epsrhon} \Epsfn \dx\dv \Bigr |
\leq C_{0} \| \tilde\phi \|_{L^\infty} \Eps.
\end{align}
Note that by Lemma \ref{lemma:f0n}, the constant $C_{0}$ can be chosen to be independent of $n$, and depending only on $\norm{\Grad K}_{W^{1, \infty}}$ and $\iint |v| f_0 \, dx dv$.

Denote by $\Omega(T) \subset \R^{2d}$ the common support of $\Epsfn(t)$ for all $\Eps>0$, $n \geq 1$ at any $t \in [0, T]$. Then, by~\eqref{convergence:1} and Lemma~\ref{lemma:1}, we have that for each $t\in [0,T]$,
\begin{align*}
& 
   \iint_{\R^d \times \R^d} \tilde\phi(x, v) \vpran{v + \Grad K \ast \Epsrhon} \Epsfn\dx\dv
\\
&= \iint_{\Omega(T)} \tilde\phi(x, v) \vpran{v + \Grad K \ast \Epsrhon} \Epsfn \dx\dv
\rightarrow
   \iint_{\R^d \times \R^d} \tilde\phi(x, v) \vpran{v + \Grad K \ast \Epsrho} \Epsf\dx\dv,
\end{align*}
as $n \to \infty$. Hence by taking the limit  $n\to \infty$ in~\eqref{bound:generalized-moment-1}, we obtain the desired bound in~\eqref{bound:generalized-moment-f-eps}. 
 \end{proof}

%%%%%%%%%%%%%%

\section{Convergence as $\Eps \to 0$ of solutions to \eqref{eqn:kinetic}}
\label{sect:conv}

By Proposition~\ref{thm:bound-f-eps}, measure-valued solutions $\Epsf$  of the transport equation~\eqref{eq:kinetic}, in the sense of Definition \ref{defn:sol}, satisfy the uniform (in $\Eps$) estimate \eqref{bound:generalized-moment-f-eps}. In this section we use this key estimate to pass the limit $\Eps \to 0$ in \eqref{eq:kinetic}, that is, in the initial-value problem for \eqref{eqn:kinetic}.

First we explain the setting for well-posedness of the macroscopic equation~\eqref{eq:aggre}. Consider the initial value problem for \eqref{eq:aggre}:
\begin{equation}
\begin{aligned} \label{eq:rho}
    \rho_t - \Grad \cdot &((\Grad K \ast \rho) \rho) =  0 \,, 
\\
    \rho \big|_{t=0} &= \rho_0 (x) \,.
\end{aligned}
\end{equation}
Similar to the kinetic equation, there exist several concepts of solutions to \eqref{eq:rho} over an arbitrary time interval $[0, T)$. 

The first concept is the measure-valued solution as defined in \cite{CanizoCarrilloRosado2011}. More specifically, assuming that  $\Grad K \in W^{1, \infty}(\Rd)$, one can apply the framework in \cite{CanizoCarrilloRosado2011} and obtain a unique measure-valued solution $\rho \in C([0, T); \CalP_1(\R^d))$ in the mass transportation sense (similar to how a measure solution for the kinetic equation \eqref{eqn:kinetic} has been introduced in Definition~\ref{defn:sol}).

The second notion is the weak solution in $C([0, T); \CalP(\R^d))$ where the continuity in time is in the narrow sense. In particular, a weak solution $\rho \in C([0, T); \CalP(\R^d))$ to~\eqref{eq:rho} satisfies  
\begin{align} \label{eq:rho-weak}
   \int_0^T \int_{\R^d} \del_t\psi(t, x) \rho(t,x) \dx\dt
   - \int_0^T \int_{\R^d} \nabla_x \psi \cdot \vpran{\Grad K \ast \rho} \rho \dx\dt
   + \int_{\R^d} \psi(0, x) \rho_0(x) \dx =0,
\end{align}
for any $\psi \in C^1_c([0, T); C^1_b(\R^d))$. A global-in-time well-posedness theory of weak measure solutions to \eqref{eq:rho} was established in \cite{Figalli_etal2011} for a very general class of (nonsmooth) potentials. In their setting, as well as ours, the two concepts of solutions are in fact equivalent (see Step 3 in the proof of Theorem \ref{thm:conv-sol} for a more detailed account of this fact).

%%%%%

\subsection{Convergence to the macroscopic equation}
\label{subsect:conv-eq}

The main result is the following theorem.
\begin{Theorem}
\label{thm:conv-sol}
Let $T > 0$ be arbitrary, $\Grad K \in W^{1, \infty}(\R^d)$ and $f_0 \in \CalP_1(\R^d \times \R^d)$ with compact support. Suppose $\Epsf \in C([0, T); \CalP_1(\R \times \Rd))$ is the measure-valued solution to~\eqref{eq:kinetic} obtained in Theorem \ref{thm:exist-measure}. Let $\rho_\Eps$ be the first marginal of $\Epsf$ as defined in \eqref{eqn:1stm}. 

Then, there exists $\rho \in C([0, T); \CalP_1(\R^d))$ such that for each $t \in [0, T)$,
\begin{align} \label{convg:rho-Eps}
     \Epsrho(t, \cdot) \stackrel{W_1}{\longrightarrow} \rho(t, \cdot)
\qquad \text{in $\CalP_1(\R^d)$ as $\Eps \to 0$.}
\end{align}
Moreover, $\rho$ is the unique solution to the initial value problem \eqref{eq:rho} in the weak sense, i.e., it satisfies \eqref{eq:rho-weak}, where $\rho_0$ is the first marginal of $f_0$. 
\end{Theorem}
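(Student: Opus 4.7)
The strategy is to use the uniform compact support (Proposition~\ref{prop:supp}) to extract a convergent subsequence of $\{\Epsrho\}$ in $C([0, T]; \CalP_1)$, pass to the $\Eps \to 0$ limit in the weak formulation \eqref{eq:f-Eps-weak} with test functions independent of $v$, and use the main estimate \eqref{bound:generalized-moment-f-eps} to convert the velocity flux $\vint{v \Epsf}$ into the macroscopic drift $-(\Grad K \ast \Epsrho) \Epsrho$ up to an $O(\Eps)$ error. Uniqueness of weak measure solutions to \eqref{eq:rho} then upgrades subsequential convergence to convergence of the entire family.

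\medskip

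\emph{Step 1 (compactness).} Proposition~\ref{prop:supp} yields a uniform-in-$\Eps$ compact support $B_{R(T)}$ for $\Epsf(t)$, hence also for $\Epsrho(t)$, for all $t \in [0, T]$. Each $\Epsrho(t)$ is a probability measure, so the family is pointwise-in-$t$ pre-compact in $(\CalP_1(\R^d), W_1)$. For uniform $t$-equicontinuity, integrate \eqref{eq:kinetic} in $v$ to obtain $\del_t \Epsrho + \Grad \cdot \vint{v \Epsf} = 0$; since $|v| \leq R(T)$ on the support of $\Epsf$, for any $1$-Lipschitz $\psi$,
\begin{align*}
   \left| \tfrac{\rd}{\dt} \int_{\R^d} \psi \, \Epsrho \dx \right|
   = \left| \iint_{\R^d \times \R^d} \Grad \psi \cdot v \, \Epsf \dx \dv \right|
   \leq R(T),
\end{align*}
uniformly in $\Eps$. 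By Kantorovich--Rubinstein duality this is a uniform $W_1$-Lipschitz bound in $t$, and Arzel\`a--Ascoli produces a subsequence $\Epsrho_k \to \rho$ in $C([0, T]; \CalP_1)$. Repeating the equicontinuity argument of Lemma~\ref{lemma:1}, $\Grad K \ast \Epsrho_k \to \Grad K \ast \rho$ uniformly on compact sets.

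\medskip

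\emph{Step 2 (limit equation).} In \eqref{eq:f-Eps-weak}, pick a test function $\phi(t, x, v) = \psi(t, x)$ with $\psi \in C^1_c([0, T); C^1_b(\R^d))$. Since $\Gradv \phi \equiv 0$, the singular $1/\Eps$ term drops out. Applying \eqref{bound:generalized-moment-f-eps} with $\tilde\phi(x, v) = \Grad \psi(t, x) \in C_b(\R^d \times \R^d)$ (bounded uniformly in $t$ by the compact $t$-support of $\psi$) gives
\begin{align*}
   \iint_{\R^d \times \R^d} \Grad \psi \cdot v \, \Epsf \dx \dv
   = - \int_{\R^d} \Grad \psi \cdot (\Grad K \ast \Epsrho) \Epsrho \dx + O(\Eps),
\end{align*}
with error uniform in $t$. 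Integrating in $t$ and sending $\Eps_k \to 0$, the convergences from Step~1 (uniform convergence of $\Epsrho_k$ in $W_1$ and of $\Grad K \ast \Epsrho_k$ on compacta) allow us to pass to the limit in every remaining term, producing exactly \eqref{eq:rho-weak} for $\rho$, with initial datum $\rho_0$ (the first marginal of $f_0$).

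\medskip

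\emph{Step 3 (uniqueness and main obstacle).} Under $\Grad K \in W^{1, \infty}$, the weak formulation \eqref{eq:rho-weak} admits a unique solution in $C([0, T); \CalP_1)$: the mass-transport framework of \cite{CanizoCarrilloRosado2011} produces a unique solution whose equivalence with the weak notion is established in \cite{AGS2005, Figalli_etal2011}. Consequently the entire family $\Epsrho$ converges to $\rho$, which yields \eqref{convg:rho-Eps} ($W_1$ convergence is equivalent to narrow convergence here by the uniform compact support, cf.\ Remark~\ref{rmk:W1}). The main obstacle, in my view, is Step~2: although compact support makes $\Grad \psi \cdot v$ bounded on $\operatorname{supp} \Epsf$, one cannot a priori identify the limit of $\iint \Grad \psi \cdot v \, \Epsf$ as the drift term $-\int \Grad \psi \cdot (\Grad K \ast \rho) \rho \dx$ without further information on how the velocity marginal of $\Epsf$ concentrates. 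This is precisely what the estimate \eqref{bound:generalized-moment-f-eps} supplies --- it enforces concentration of $\Epsf$ on the set $\{v = -\Grad K \ast \Epsrho\}$ in a quantitative weak sense --- and without it the passage to the limit in Step~2 would fail.
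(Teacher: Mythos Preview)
Your proof is correct and follows the paper's three-step architecture: compactness, passage to the limit in the weak formulation \eqref{eq:f-Eps-weak} with $v$-independent test functions using estimate \eqref{bound:generalized-moment-f-eps}, and uniqueness to upgrade subsequential convergence to convergence of the full family.

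The one genuine difference is in Step~1. The paper fixes a single test function $\psi_2 \in C^1_b(\R^d)$, obtains a $W^{1,\infty}(0,T)$ bound on $t \mapsto \int \psi_2 \, \Epsrho$, and then combines this with Prokhorov's theorem at each fixed time to extract subsequences, eventually bootstrapping to time-dependent test functions via an equicontinuity argument. You instead take the supremum over all $1$-Lipschitz $\psi$ at once via Kantorovich--Rubinstein duality, obtaining a uniform $W_1$-Lipschitz bound on $t \mapsto \Epsrho(t)$, and apply Arzel\`a--Ascoli directly in the metric space $C([0,T]; \CalP_1(\R^d))$. Your route is shorter and yields a stronger conclusion (convergence in $C([0,T]; \CalP_1)$ rather than only pointwise in $t$), and this pays an immediate dividend in Step~2: the convergence $\Grad K \ast \rho_{\Eps_k} \to \Grad K \ast \rho$ uniformly in $(t,x)$ follows in one line from
\[
   \bigl| \Grad K \ast (\rho_{\Eps_k} - \rho)(t,x) \bigr|
   \leq \norm{\Grad^2 K}_{L^\infty} \, W_1\bigl(\rho_{\Eps_k}(t), \rho(t)\bigr),
\]
whereas the paper, having only pointwise-in-$t$ convergence, must mollify $K$ and derive equicontinuity in time separately (cf.\ \eqref{convg:K-n}--\eqref{convg:5}). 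Both approaches rely on the same key input, namely \eqref{bound:generalized-moment-f-eps}, at the same place and for the same purpose you identify in your discussion of the main obstacle.
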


%Before proving Theorem~\ref{thm:conv-sol}, we show some uniform (in time) continuity of $\rho_\Eps$ by classical argument. In particular, we have  
%\begin{Lemma} \label{lemma:rho-Eps}
%Assume the same hypotheses as in Theorem~\ref{thm:conv-sol}. Then 
%\begin{align*}
%     \rho_\Eps \in W^{1, \infty}([0, T); \mathcal{M}(\R^d)).
%\end{align*}
%\end{Lemma}
%\begin{proof}
%Let $\phi(t, x, v) = \psi(t)\theta(x)$ in~\eqref{eq:f-Eps-weak} with $\psi \in C_c(0, T)$ and $\theta \in C^1_b(\R^b)$. Then
%\begin{equation}
%\label{eqn:rhoeps-weak-1}
%    \int_0^T \psi'(t)\int_{\R^{d}} \theta(x) \Epsrho(t, x) \dx\dtau
%    = \int_0^T \psi(t) \iint_{\R^d \times \R^d} \Grad \theta \cdot v \Epsf \dx\dv\dtau \,.
%\end{equation}
%Hence, if we denote $\eta(t) = \int_{\R^{d}} \theta(x) \Epsrho(t, x) \dx$,
%%\begin{align*}
%%    \eta(t) = \int_{\R^{d}} \theta(x) \Epsrho(t, x) \dx \,.
%%\end{align*}
%then $\eta \in L^\infty(0, T)$ and the weak derivative of $\eta$ is
%\begin{align*}    
%    \eta'(t) = \iint_{\R^d \times \R^d} \Grad \theta \cdot v \Epsf \dx\dv
%    \in L^\infty(0, T) \,.
%\end{align*}
%Therefore 
%\end{proof}

\begin{proof}%[Proof of Theorem~\ref{thm:conv-sol}] 
We break the proof into several steps.

{\em Step 1.} First we show a  uniform in time convergence of $\rho_\Eps$. Recall that the measure solution $\Epsf$ of \eqref{eq:kinetic}  is also a weak solution, cf. \eqref{eq:f-Eps-weak}. For any fixed $\psi_1 \in C_c^1(0, T)$ and $\psi_2 \in C^1_b(\R^d)$, let $\phi(t, x, v) = \psi_1(t)\psi_2(x)$ and use it as a test function in \eqref{eq:f-Eps-weak}, to get:
\begin{equation}
\label{eqn:rhoeps-weak-1}
    \int_0^T \psi'_1(t)\int_{\R^{d}} \psi_2(x) \Epsrho(t, x) \dx\dt
    = - \int_0^T \psi_1(t) \iint_{\R^d \times \R^d} \Grad \psi_2 \cdot v \Epsf \dx\dv\dt.
\end{equation}
Denote 
\begin{equation}
\label{eqn:eta1}
\eta_1(t) = \int_{\R^{d}} \psi_2(x) \Epsrho(t, x) \dx.
\end{equation}
%\begin{align*}
%    \eta(t) = \int_{\R^{d}} \theta(x) \Epsrho(t, x) \dx \,.
%\end{align*}
By \eqref{eqn:rhoeps-weak-1}, the weak derivative of $\eta_1$ is given by
\begin{align*}    
    \eta'_1(t) = \iint_{\R^d \times \R^d} \Grad \psi_2 \cdot v \Epsf \dx\dv
    \in L^\infty(0, T).
\end{align*}
Since $f_\Eps$ is uniformly supported on $\Omega(T)$, we have
\begin{align} \label{bound:eta-1}
    \norm{\eta_1}_{W^{1, \infty}(0, T)}
\leq C(T)\norm{\psi_2}_{C^1_b(\R^d)},
\end{align}
where $C(T)$ only depends on $T$ (in particular, $C(T)$ is independent of $\Eps$).

Since $\eta_1(t)$ is uniformly bounded in $W^{1, \infty}(0, T)$, it converges uniformly on a subsequence.
%\begin{align} \label{bound:narrow-unif-rho-Eps}
%    \int_{\R^d} \psi_2(x) \rho_\Eps(t, x) \dx 
%\quad \text{is uniformly bounded in $W^{1, \infty}(0, T)$}
%\end{align}
We conclude that given any $\psi_2 \in C^1_b(\R^d)$, there exists a subsequence $\Eps_k$ and $\eta_2(t) \in C([0, T))$ such that 
\begin{equation}
\label{eqn:convk}
    \int_{\R^d} \psi_2(x) \rho_{\Eps_k}(t, x) \dx 
\to \eta_2(t) \quad \text{uniformly in } C([0, T)) \quad \text{ as } \Eps_k \to 0.
\end{equation}

On the other hand, note that Proposition \ref{prop:supp} provides a uniform (in $\Eps$) bound for the support of $\Epsrho$, which implies that the sequence $\Epsrho(\cdot,t)$  is tight. 
%By Remark~\ref{rmk:W1}, the sequence of probability measures $\rho_{\Eps_k}(t, \cdot)$ is relatively compact in $\CalP_1(\R^d)$ for each $t \in [0, T)$. 
By Prokhorov's theorem (cf. \cite[Theorem 4.1]{Billingsley}), for each $t \in [0, T)$, $\Epsrho(t,\cdot)$ converges weak-$^\ast$ on a subsequence to a probability measure $\rho(\cdot,t) \in \CalP(\R^d)$. By Remark~\ref{rmk:W1}, the convergence holds in fact in $\mathcal{P}_1(\Rd)$, with respect to the Wasserstein metric $W_1$. 

Hence, at each $t>0$, there exist a subsequence of $\rho_{\Eps_k}$, denoted as $\rho_{\Eps_{k_n}}$ ($k_n$ may depend on $t$), which satisfies
\begin{align*} %\label{convg:6}
     \rho_{\Eps_{k_n}} (t, \cdot)  \stackrel{W_1}{\longrightarrow} \rho(t, \cdot)
\qquad \text{ in } \CalP_1(\R^d) \quad \text{ as } \Eps_{k_n} \to 0.
\end{align*}
Consequently, for each $t \in [0, T)$ and any $\psi_2 \in C^1_b(\R^d)$,
\begin{align*}
    \int_{\R^d} \psi_2(x) \rho_{\Eps_{k_n}}(t, x) \dx 
\to \int_{\R^d} \psi_2(x) \rho(t, x) \dx \qquad \text{as } \Eps_{k_n} \to 0.
\end{align*}
Combined with \eqref{eqn:convk} and the uniqueness of $\eta_2(t)$ at each $t \in [0, T)$, this shows that the full sequence $\rho_{\Eps_k}(t, \cdot)$ (with $\Eps_k$ independent of $t$) and $\rho(t, \cdot) \in \CalP_1(\R^d)$ satisfy
\begin{equation}
\label{eqn:convku}
    \int_{\R^d} \psi_2(x) \rho_{\Eps_k}(t, x) \dx 
\to \int_{\R^d} \psi_2(x) \rho(t, x) \dx \qquad \text{uniformly on } [0, T) \quad \text{ as } \Eps_k \to 0,
\end{equation}
for any $\psi_2 \in C^1_b(\R^d)$. In addition, , we have that 
\begin{align} \label{convg:6}
     \rho_{\Eps_k} (t, \cdot)  \stackrel{W_1}{\longrightarrow} \rho(t, \cdot)
\qquad \text{in } \CalP_1(\R^d) \quad \text { as } \Eps_k \to 0.
\end{align}

Next we show that we can also allow $\psi_2$ to depend on $t$ in \eqref{eqn:convku}. Specifically, we claim that given any $\psi_3 \in C_c([0, T); C^1_b(\R^d))$, 
\begin{align} \label{bound:narrow-unif-rho-Eps-1}
    \int_{\R^d} \psi_3(t, x) \rho_{\Eps_k}(t, x) \dx 
\quad \text{is equicontinous on $[0, T)$ .}
\end{align}
Indeed, for any $t, s \in [0, T)$, 
\begin{align*}
& \quad \,
   \biggl |\, \int_{\R^d} \psi_3(t, x) \rho_{\Eps_k}(t, x) \dx 
           -  \int_{\R^d} \psi_3(s, x) \rho_{\Eps_k}(s, x) \dx \biggr |
\\
&\leq \int_{\R^d} \abs{\psi_3(t, x) - \psi_3(s, x)} \rho_{\Eps_k}(t, x) \dx 
       + \biggl | \, \int_{\R^d} \psi_3(s, x) \rho_{\Eps_k}(t, x) \dx 
           -  \int_{\R^d} \psi_3(s, x) \rho_{\Eps_k}(s, x) \dx \biggr |.
\\
& \leq \sup_{x} \abs{\psi_3(t, x) - \psi_3(s, x)}
          + C(T)\sup_t \norm{\psi_3}_{C^1_b(\R^d)} |t-s| \,,
\end{align*}
where the last inequality follows from~\eqref{bound:eta-1}. Since $\psi_3$ is uniformly continuous on $[0, T) \times \R^d$, we have that 
\begin{align*}
    \sup_{x} \abs{\psi_3(t, x) - \psi_3(s, x)} \to 0 \,,
\qquad \text{uniformly as $|t-s| \to 0$.}
\end{align*}
This shows that \eqref{bound:narrow-unif-rho-Eps-1} holds. Hence up to a subsequence, still denoted as $\rho_{\Eps_k}$, we have
\begin{align} \label{bound:narrow-unif-rho-Eps-2}
    \int_{\R^d} \psi_3(t, x) \rho_{\Eps_k}(t, x) \dx  
\to \int_{\R^d} \psi_3(t, x) \rho(t, x) \dx
\qquad \text{uniformly on } [0, T) \quad \text{ as } \Eps_k \to 0,
\end{align}
for any $\psi_3 \in C_c([0, T); C^1_b(\R^d))$. 
\smallskip
%%%

{\em Step 2.} In this step we pass the limit $\Eps_k \to 0$ on the subsequence $\rho_{\Eps_k}$ to find a limiting equation for $\rho$.

Define
\begin{align} \label{def:Omega-1}
   \Omega_1(T) = \{x \in \R^d: (x, v) \in \Omega(T)\} \,,
\end{align} 
where recall that $\Omega(T) \subset \R^{2d}$ represents the common support of $\Epsf(t)$ for all $\Eps>0$ and $t \in [0, T]$. We have that $\Omega_1(T)$ is bounded and that the supports of $\rho_{\Eps_k}$ and $\rho$ are included in $\Omega_1(T)$ for all $t \in [0, T]$. 

For any $\psi \in C^1_c([0, T); C_b^1(\R^d))$, let $\phi(t, x, v) = \psi(t, x)$ in~\eqref{eq:f-Eps-weak}. Then
\begin{equation}
\label{eqn:rhoeps-weak}
    \int_0^T\int_{\R^{d}} \del_t\psi(t,x) \rho_{\Eps_k}(t, x) \dx \dt
    +\int_0^T \iint_{\R^d \times \R^d} \Grad \psi \cdot v f_{\Eps_k} \dx\dv\dt
   + \int_{\R^d} \psi(0, x) \rho_0(x) \dx =0.
\end{equation}

We want to pass $\Eps_k \to 0$ in \eqref{eqn:rhoeps-weak}. By~\eqref{bound:narrow-unif-rho-Eps-2}, 
\begin{align*}
     \int_0^T\int_{\R^{d}} \del_t\psi(t,x) \rho_{\Eps_k}(t, x) \dx \dt
\to 
    \int_0^T\int_{\R^{d}} \del_t\psi(t,x) \rho(t, x) \dx \dt
\qquad \text{as $\Eps_k \to 0$}.
\end{align*}

Next we rewrite the integrand of the second term in~\eqref{eqn:rhoeps-weak}  as
\begin{equation}
\label{eqn:rewrite}
    \iint_{\R^d \times \R^d} \Grad \psi \cdot v f_{\Eps_k} \dx\dv
  = \iint_{\R^d \times \R^d} 
        \Grad \psi \cdot \vpran{v + \Grad K \ast \rho_{\Eps_k}} f_{\Eps_k} \dx\dv
     - \int_{\R^d}
        \Grad \psi \cdot \vpran{\Grad K \ast \rho_{\Eps_k}} \rho_{\Eps_k} \dx.
\end{equation}
Use \eqref{bound:generalized-moment-f-eps} to get
\begin{align} \label{convg:4}
    \iint_{\R^d \times \R^d} 
        \Grad \psi \cdot \vpran{v + \Grad K \ast \rho_{\Eps_k}} f_{\Eps_k} \dx\dv
    \to 0 
\qquad \text{as } \Eps_k \to 0 \quad \text{ uniformly in } t.
\end{align}

By the same argument as in Lemma~\ref{lemma:1}, one can show that $\{\Grad K \ast \rho_{\Eps_k}(t, \cdot)\}$ is a bounded family in $W^{1, \infty}(\R^d)$ for each $t \in [0, T)$. More precisely,
\begin{align*}
     \norm{\Grad K \ast \rho_{\Eps_k}(t, \cdot)}_{W^{1, \infty}(\Rd)}
\leq \norm{\Grad K}_{W^{1, \infty}}.
\end{align*}
Now we want to show that $\Grad K \ast \rho_{\Eps_k}$ is also equicontinuous in $t$. Note that $\Grad K$ does not have enough regularity for the bound in~\eqref{bound:eta-1} to apply directly. To bypass  this, we mollify $K$ by convolution and let $K_n = K \ast \eta^{(n)}$ where $\eta^{(n)}$ is the same mollifier defined in~\eqref{eqn:f0moll}. Hence $\Grad K_n = \Grad K \ast \eta^{(n)}$ and $\Grad^2 K_n = \Grad^2 K \ast \eta^{(n)}$. This shows
\begin{align*}
     \norm{\Grad K_n}_{C^1_b} 
\leq 
    \norm{\Grad K}_{W^{1, \infty}}
\qquad \text{for all $n \geq 1$}.
\end{align*}
Use $\psi_2 = \Grad K_n$ in \eqref{eqn:eta1}. Then, bound \eqref{bound:eta-1} yields
\begin{align*}
    \sup_{x}\norm{\Grad K_n \ast \rho_{\Eps_k}}_{W^{1, \infty}(0, T)}
\leq C(T) \norm{\Grad K_n}_{C^1_b}
\leq C(T) \norm{\Grad K}_{W^{1, \infty}}.
\end{align*}
Together with 
\begin{align*}
     \norm{\Grad K_n \ast \rho_{\Eps_k}(t, \cdot)}_{W^{1, \infty}(\Rd)}
\leq \norm{\Grad K_n}_{W^{1, \infty}}
\leq \norm{\Grad K}_{W^{1, \infty}},
\end{align*}
we have 
\begin{align*}
   \norm{\Grad K_n \ast \rho_{\Eps_k}}_{W^{1, \infty}([0, T) \times \R^d)}
\leq 
   \vpran{C(T) + 1} \norm{\Grad K}_{W^{1, \infty}}.
\end{align*}

Therefore, for any $t, s \in [0, T)$ and $x, y \in \R^d$,
\begin{align} \label{convg:K-n}
    \abs{\Grad K_n \ast \rho_{\Eps_k}(t, x) - \Grad K_n \ast \rho_{\Eps_k}(s, y)}
\leq 
   \vpran{C(T) + 1} \norm{\Grad K}_{W^{1, \infty}} \vpran{|t-s| + |x - y|}.
\end{align}
Since $\Grad K$ is continuous, we have that $\Grad K_n \to \Grad K$ uniformly on any compact set in $\R^d$. By
\begin{align*}
   \abs{\Grad K_n \ast \rho_{\Eps_k} (t, x)
           - \Grad K \ast \rho_{\Eps_k} (t, x)}
\leq 
   \sup_x \abs{\Grad K_n(x) - \Grad K(x)}, 
\end{align*}
we deduce that for any compact set $\widetilde \Omega \subset \R^d$
\begin{align*}
  \Grad K_n \ast \rho_{\Eps_k} (t, x) 
\stackrel{n\to\infty}{\longrightarrow}
  \Grad K \ast \rho_{\Eps_k} (t, x) \,,
\qquad \text{uniformly for $t \in [0, T), x \in \widetilde\Omega$, and $k \in \N$}.
\end{align*}
Hence, if we pass $n \to \infty$ in~\eqref{convg:K-n} over any compact set $\tilde\Omega \subset \R^d$, then
\begin{align} \label{convg:K-conv-rho}
    \abs{\Grad K \ast \rho_{\Eps_k}(t, x) - \Grad K \ast \rho_{\Eps_k}(s, y)}
\leq 
   \vpran{C(T) + 1} \norm{\Grad K}_{W^{1, \infty}} \vpran{|t-s| + |x - y|},
\end{align}
for any $t, s \in [0, T)$ and $x, y \in \widetilde\Omega$.

By Ascoli-Arzel\'a theorem, there exists a further subsequence (also denoted as $\rho_{\Eps_k}$) such that
\begin{align} \label{convg:5}
    \Grad K \ast \rho_{\Eps_k} \to \Grad K \ast \rho 
\qquad \text{ as } \Eps_k \to 0 \quad \text{ strongly in } L^\infty([0, T) \times \widetilde\Omega),
\end{align} 
for any compact set $\widetilde\Omega \subset \R^d$.

Now for every $t \in (0, T)$, we have
\begin{equation}
\begin{aligned}
&  \quad \,
\Biggl |\; \int_{\R^{d}} \Grad\psi \cdot
      \big(\vpran{\Grad K \ast \rho_{\Eps_k}} \rho_{\Eps_k}
                 - \vpran{\Grad K \ast \rho} \rho \big) \dx \Biggr |
\\
& \leq 
  \int_{\Omega_1(T)} \abs{\Grad\psi}
      \abs{\Grad K \ast \rho_{\Eps_k}
                 - \Grad K \ast \rho} \rho_{\Eps_k} \dx
%\\
%& \hspace{1.5cm}
 + \Biggl | \; \int_{\R^{d}} \Grad\psi \cdot \vpran{\Grad K \ast \rho}
                   \vpran{\rho_{\Eps_k} - \rho} \dx \Biggr | \nonumber
\\
& \leq  
   \norm{\Grad K \ast \rho_{\Eps_k}
                 - \Grad K \ast \rho}_{L^\infty(\Omega_1(T))} 
   \norm{\Grad\psi}_{L^\infty}
%\\
%& \hspace{1.5cm}
 + \Biggl | \; \int_{\Omega_1(T)} \Grad\psi \cdot \vpran{\Grad K \ast \rho}
                   \vpran{\rho_{\Eps_k} - \rho} \dx \Biggr | \,,
                   %\label{ineq:nonlinear}
\end{aligned}
\end{equation}
where the first term converges to zero uniformly (in time) as $\Eps_k \to 0$ by~\eqref{convg:5} and the second term converges to zero pointwise in $t$ by~\eqref{convg:6}. This combined with \eqref{convg:4} and \eqref{eqn:rewrite} yields that for each $t \in (0, T)$, 
\begin{align*}
    \iint_{\R^d \times \R^d} \Grad \psi \cdot v f_{\Eps_k} \dx\dv
  \to 
  - \int_{\R^d}
        \Grad \psi \cdot \vpran{\Grad K \ast \rho} \rho \dx \qquad \text{ as } \Eps_k \to 0 \,.
\end{align*}

To conclude, let 
\begin{align*}
   \Omega_2(T) = \{v \in \R^d: (x, v) \in \Omega(T)\} \,.
\end{align*} 
Then $\Omega_2(T)$ is bounded for all $t \in (0, T)$ and we have the uniform (in $t$) bound 
\begin{align*}
    \Biggl | \; \; \iint_{\R^d \times \R^d} \Grad \psi \cdot v f_{\Eps_k} \dx\dv \Biggr |
\leq 
    C_2 \norm{\Grad\psi}_{L^\infty(\R^d)},
\end{align*}
where $C_2$ only depends on $\Omega_2(T)$. By the Lebesgue's dominated convergence theorem, we infer that 
\begin{align*}
   \int_0^T \iint_{\R^d \times \R^d} \Grad\psi \cdot v f_{\Eps_k} \dx\dv\dt
  \to - \int_0^T \iint_{\R^d \times \R^d} 
        \Grad\psi \cdot \vpran{\Grad K \ast \rho} \rho \dx\dt,
\qquad 
   \text{as $\Eps_k \to 0$}.
\end{align*}
Now \eqref{eq:rho-weak} follows from \eqref{eqn:rhoeps-weak} in the $\Eps_k \to 0$ limit. 
%%%

\smallskip
{\em Step 3.} Hence the limiting measure $\rho \in C([0, T); \CalP(\R^d))$ is a weak solution to~\eqref{eq:rho}. By \cite{AGS2005} (Lemma 8.1.6 in Chapter 8), $\rho$ is the push-forward of the initial density $\rho_0$ by the characteristic flow, i.e., 
$\rho = {\mathcal{T}}^{t}_{E_1[\rho]} \# \rho_0$ with the vector field given by $E_1[\rho] = -\Grad K \ast \rho \in L^\infty([0, T) \times \R^d)$. Moreover, since $\rho(t, \cdot)$ is compactly supported and narrowly continuous in time, we have that $\rho(t, \cdot) \in C([0, T); \CalP_1(\R^d))$, where the continuity is in the $W_1$ metric cf. Remark~\ref{rmk:W1}. 

We conclude that $\rho$ is the {\em unique} solution of \eqref{eq:rho} in the mass transportation sense \cite{CanizoCarrilloRosado2011}. Consequently, we infer that the {\em full} sequence $\rho_\Eps(t, \cdot)$ converges to $\rho(t, \cdot)$ with respect to the $W_1$ distance, for each $t \in [0, T)$, as desired.
\end{proof}

%%%%%%%%%%%

\subsection{Convergence of characteristic paths}
 Consider the solution $(x_\Eps(t),v_\Eps(t))$ of \eqref{eq:characteristics-eps}, that is, the characteristic paths defining the flow on $\Rd \times \Rd$ along which $\Epsf$ is being transported. We now investigate their limit as $\Eps \to 0$.

\begin{Theorem}[Convergence of characteristic paths]
\label{thm:conv-traj} Consider the measure-valued solution $\Epsf$ to \eqref{eq:kinetic} and a characteristic path $(x_\Eps(t),v_\Eps(t))$ that originates  from $(x_0,v_0) \in \operatorname{supp} f_0$ at $t=0$. Then,
\begin{equation}
\label{eqn:conv-trajx}
\lim_{\Eps \to 0} x_\Eps(t) = x(t), \quad \text{ for all } \quad 0\leq t \leq T, 
\end{equation}
where $x(t)$ is the characteristic trajectory of the limiting macroscopic equation \eqref{eq:rho} that starts at $x_0$, i.e., $x(t)$ satisfies
\[
\frac{dx}{dt} = -\nabla_x K \ast \rho, \quad x(0) = x_0.
\]
Also,
\begin{equation}
\label{eqn:conv-trajv}
 \lim_{\Eps \to 0}v_\Eps(t) = v(t), \quad \text{ for all } \quad 0< t \leq T,
\end{equation}
where 
\begin{equation}
\label{eqn:vlimit}
v(t) = -\nabla_xK \ast \rho(t,x(t)).
\end{equation}
\end{Theorem}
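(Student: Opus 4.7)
The plan is to treat the limit as a singular perturbation à la Tikhonov, complicated by the fact that the forcing $\nabla K \ast \rho_\Eps$ in the characteristic system \eqref{eq:characteristics-eps} itself depends on $\Eps$ through the kinetic solution. The strategy is to monitor the residual
\[
    w_\Eps(t) := v_\Eps(t) + \nabla K \ast \rho_\Eps\bigl(t, x_\Eps(t)\bigr),
\]
which measures the departure of the characteristic from the degenerate manifold $\{v = -\nabla K \ast \rho\}$. Once I show $w_\Eps(t) \to 0$ for every $t > 0$, the convergence \eqref{eqn:conv-trajx} of positions will follow from a Gronwall comparison with the limit flow, and \eqref{eqn:conv-trajv} will be an immediate consequence combined with Theorem \ref{thm:conv-sol}.

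For the residual, differentiating $w_\Eps$ along a characteristic and using $\Eps \dot v_\Eps = -w_\Eps$ produces a linear ODE of the form
\[
    \frac{dw_\Eps}{dt} = -\frac{1}{\Eps} w_\Eps + A_\Eps(t),
    \qquad
    A_\Eps(t) := \partial_t(\nabla K \ast \rho_\Eps)(t, x_\Eps) + (\nabla^2 K \ast \rho_\Eps)(t, x_\Eps) \cdot v_\Eps.
\]
Using the macroscopic conservation law \eqref{eq:rhoeps} and integrating by parts in $x$ rewrites the time derivative as $\nabla^2 K \ast \langle v f_\Eps \rangle$; since $\nabla K \in W^{1,\infty}$ and the supports of $f_\Eps$ lie in a fixed ball independent of $\Eps$ by Proposition \ref{prop:supp}, both terms defining $A_\Eps$ are bounded by a constant uniform in $\Eps$ and $t \in [0,T]$. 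Duhamel's formula then yields
\[
    |w_\Eps(t)| \le e^{-t/\Eps} |w_\Eps(0)| + C \Eps \bigl(1 - e^{-t/\Eps}\bigr),
\]
which converges to zero for every fixed $t > 0$. (Here a smoothing of the initial data as in Lemma \ref{lem:f-n-Eps} justifies the differential manipulations, before passing to the limit $n \to \infty$.)

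Substituting $v_\Eps = -\nabla K \ast \rho_\Eps(t, x_\Eps) + w_\Eps$ into $\dot x_\Eps = v_\Eps$ and subtracting $\dot x = -\nabla K \ast \rho(t, x)$ gives
\[
    \frac{d}{dt}(x_\Eps - x) = -\bigl[\nabla K \ast \rho_\Eps(t, x_\Eps) - \nabla K \ast \rho(t, x_\Eps)\bigr] - \bigl[\nabla K \ast \rho(t, x_\Eps) - \nabla K \ast \rho(t, x)\bigr] + w_\Eps(t).
\]
The first bracket tends to zero uniformly on compact sets by the $L^\infty_{\mathrm{loc}}$ convergence of $\nabla K \ast \rho_\Eps$ established in the proof of Theorem \ref{thm:conv-sol} (cf.\ \eqref{convg:5}), evaluated along the bounded trajectory $x_\Eps(t)$; the second is controlled by $\|\nabla^2 K\|_{L^\infty} |x_\Eps - x|$; and $w_\Eps$ is uniformly bounded and tends to zero pointwise on $(0, T]$, hence in $L^1(0,T)$ by dominated convergence. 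Gronwall's inequality then delivers \eqref{eqn:conv-trajx} uniformly on $[0, T]$. For $t > 0$, writing $v_\Eps(t) = -\nabla K \ast \rho_\Eps(t, x_\Eps(t)) + w_\Eps(t)$ and passing to the limit produces \eqref{eqn:conv-trajv}.

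The main obstacle is the genuine initial boundary layer: since $v_0$ is in general distinct from $-\nabla K \ast \rho_0(x_0)$, one has $w_\Eps(0) = O(1)$, and the term $e^{-t/\Eps} w_\Eps(0)$ in Duhamel's formula prevents convergence of $v_\Eps$ at $t = 0$. This is consistent with Remark \ref{remark:boundary layer} and forces \eqref{eqn:conv-trajv} to be stated only for $t > 0$, while \eqref{eqn:conv-trajx} holds uniformly down to $t = 0$ because the offending contribution is integrated against the fast-decaying exponential.
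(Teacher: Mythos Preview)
Your argument is correct and takes a genuinely different route from the paper's proof. The paper does \emph{not} differentiate the residual $w_\Eps$; instead it introduces the auxiliary characteristic system
\[
\frac{d\tilde x}{dt}=\tilde v,\qquad \Eps\,\frac{d\tilde v}{dt}=-\tilde v-\nabla K\ast\rho,
\]
in which $\rho_\Eps$ has already been replaced by the limit $\rho$ from Theorem~\ref{thm:conv-sol}. This system is $\Eps$-free on the right-hand side, so Tikhonov's theorem (Theorem~\ref{thm:Tikhonov}) applies directly and gives $(\tilde x_\Eps,\tilde v_\Eps)\to(x,v)$. The remaining work is to show $v_\Eps-\tilde v_\Eps\to 0$, which the paper does by explicitly integrating the linear equation for the difference and invoking the uniform convergence $\nabla K\ast\rho_\Eps\to\nabla K\ast\rho$ in $L^\infty([0,T]\times\Omega_1(T))$ from \eqref{convg:5}.

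Your approach bypasses Tikhonov entirely: the Duhamel estimate $|w_\Eps(t)|\le e^{-t/\Eps}|w_\Eps(0)|+C\Eps$ is in effect a hands-on, quantitative version of the boundary-layer statement that Tikhonov provides abstractly, and your subsequent Gronwall step absorbs both the $\rho_\Eps\to\rho$ error and the residual $w_\Eps$ in one comparison. This is more elementary and yields an explicit rate for the fast-variable relaxation. The paper's route, on the other hand, cleanly ties the PDE limit back to the ODE framework of Section~\ref{sect:odelimit} and avoids differentiating $\nabla K\ast\rho_\Eps$ along the trajectory (hence no need for your mollification remark). Both proofs ultimately rest on the same two analytic inputs: the uniform-in-$\Eps$ support bound (Proposition~\ref{prop:supp}) and the $L^\infty_{\mathrm{loc}}$ convergence \eqref{convg:5}.
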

\begin{proof}
The task is to send $\Eps \to 0 $ in the characteristic system \eqref{eq:characteristics-eps}. Note that  \eqref{eq:characteristics-eps} does not fit immediately into the form \eqref{eqn:shorthand} needed for a direct application of Tikhonov's theorem (Theorem \ref{thm:Tikhonov}), as the right-hand-side of the $v$-equation depends on $\epsilon$ as well. 

Replace $\Epsrho$ by $\rho$ in the right-hand-side of \eqref{eq:characteristics-eps} to get the following system:
\begin{equation}
\begin{aligned} \label{eq:char-noeps}
     \frac{{\rm d} x}{\dt} & = v \,,
\\
     \Eps \frac{{\rm d} v}{\dt} & = - v - \Grad K \ast \rho.
\end{aligned}
\end{equation}

Denote by $(\tilde{x}_\Eps(t),\tilde{v}_\Eps(t))$ the solution of \eqref{eq:char-noeps} that starts from $(x_0,v_0)$. By Theorem \ref{thm:Tikhonov}, the convergence in \eqref{eqn:conv-trajx}-\eqref{eqn:vlimit}, which needs to be shown for ${x}_\Eps(t)$ and ${v}_\Eps(t)$, holds for $\tilde{x}_\Eps(t)$ and $\tilde{v}_\Eps(t)$. Hence, it would be enough to show that for a fixed $t>0$,
\begin{equation}
\label{conv:xv}
\lim_{\Eps \to 0} |x_\Eps(t)-\tilde{x}_\Eps(t)| = 0 \quad \text{ and } \quad \lim_{\Eps \to 0} |v_\Eps(t)-\tilde{v}_\Eps(t)| = 0.
\end{equation}

Indeed, from \eqref{eq:characteristics-eps} and \eqref{eq:char-noeps} we get
\[
\Eps \frac{{\rm d}}{\dt} (v_\Eps(t)-\tilde{v}_\Eps(t)) = -(v_\Eps(t)-\tilde{v}_\Eps(t)) - \nabla_x K \ast (\Epsrho - \rho).
\]
By integrating the above equation we find
\[
v_\Eps(t)-\tilde{v}_\Eps(t) = -\frac{1}{\Eps} \int_0^t e^{\frac{1}{\Eps}(s-t)} \, \nabla_x K \ast (\Epsrho - \rho) ds, \qquad \text{ for all } t \in [0,T].
\]
Using notations from the proof of Theorem \ref{thm:conv-sol} (see \eqref{def:Omega-1}), we have
\begin{align*}
|v_\Eps(t)-\tilde{v}_\Eps(t) &\leq \sup_{t\in[0,T]}\| \nabla_x K \ast (\Epsrho - \rho)\|_{L^\infty(\Omega_1(T))} \; \frac{1}{\Eps} \int_0^t e^{\frac{1}{\Eps}(s-t)} ds \\
& \leq \sup_{t\in[0,T]}\| \nabla_x K \ast (\Epsrho - \rho)\|_{L^\infty(\Omega_1(T))}, \qquad \text{ for all } t \in [0,T].
\end{align*}
By the uniform convergence shown in \eqref{convg:5} (which holds on the full sequence $\Epsrho$) we infer \eqref{conv:xv}, and hence, the desired result.
\end{proof}

The convergence of characteristic paths yields the limiting flow map $\mathcal{T}^t$ given by \\ $x_0 \stackrel{\mathcal{T}^t}{\longrightarrow} x(t)$. It is convenient in the calculations below to use the notation $x(t;x_0)$ to denote the limiting characteristic path $x(t)$ that starts at $x_0$. 

The next result characterizes the limiting densities.
%We want to show next that the limit $\rho$ is the mass transport of $\rho_0$ by $\mathcal{T}^t$.

%\begin{rmk}
%\label{rmk:flimit}
%\end{rmk}

%%%

\begin{Theorem}[Characterization of the limiting densities] 
\label{thm:rho-trans}
The limiting macroscopic density $\rho$ identified in Theorem \ref{thm:conv-sol} is the push-forward of the initial density $\rho_0$ by the limiting flow map $\mathcal{T}^t$, 
\begin{equation}
\label{eqn:rho-trans}
\rho = \mathcal{T}^t \# \rho_0.
\end{equation}

In addition,  for each $t\in [0,T)$, $\Epsf$ converges in the $W_1$ metric to a probability density $f(t,\cdot,\cdot)\in \mathcal{P}_1(\Rd \times \Rd)$:
\begin{equation}
\label{eqn:fconv}
   \Epsf \stackrel{W_1}{\longrightarrow} f
\qquad \text{ in } \mathcal{P}_1(\Rd \times \Rd) \quad \text{ as } \Eps \to 0.
\end{equation}
The limiting density $f$, with first marginal $\rho$, is given explicitly by: 
\begin{equation}
\label{eqn:f-delta}
f(t,x,v) = \rho(t,x) \delta (v+ \nabla K \ast \rho(t,x)).
\end{equation}
\end{Theorem}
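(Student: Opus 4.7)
The strategy is to combine the mass-transport representation of $\Epsf$ guaranteed by Theorem \ref{thm:exist-measure} with the pointwise convergence of characteristics from Theorem \ref{thm:conv-traj}, and then to match the resulting limit against the explicit formula \eqref{eqn:f-delta}. For the first assertion \eqref{eqn:rho-trans}, I would invoke Step~3 in the proof of Theorem \ref{thm:conv-sol}, where the limiting macroscopic density is identified as the push-forward $\rho = \mathcal{T}^{t}_{E_1[\rho]} \# \rho_0$ driven by the vector field $E_1[\rho] = -\Grad K \ast \rho \in L^\infty([0,T)\times\R^d)$. Since $\rho \in C([0,T);\CalP_1(\R^d))$ is compactly supported and $\Grad K \in W^{1,\infty}(\R^d)$, this vector field is Lipschitz in $x$ and continuous in $t$, so it generates a classical flow; Theorem \ref{thm:conv-traj} identifies that flow with $\mathcal{T}^t$, giving \eqref{eqn:rho-trans}.

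For \eqref{eqn:fconv} and \eqref{eqn:f-delta}, I would fix $t \in (0,T)$ and $\zeta \in C_b(\R^d\times\R^d)$, and start from the push-forward identity provided by Definition \ref{defn:sol}:
\begin{equation*}
   \iint_{\R^d\times\R^d} \zeta(x,v)\,\Epsf(t,x,v)\dx\dv
   = \iint_{\R^d\times\R^d} \zeta\bigl(x_\Eps(t;x_0,v_0),\,v_\Eps(t;x_0,v_0)\bigr)\, f_0(x_0,v_0)\dx_0\dv_0,
\end{equation*}
where $(x_\Eps,v_\Eps)$ solves \eqref{eq:characteristics-eps} from $(x_0,v_0)$. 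By Theorem \ref{thm:conv-traj}, for each $(x_0,v_0)\in \operatorname{supp}f_0$ one has $x_\Eps(t;x_0,v_0)\to x(t;x_0)$ and $v_\Eps(t;x_0,v_0)\to -\Grad K \ast \rho(t,x(t;x_0))$ as $\Eps\to 0$. Since $\zeta$ is bounded and $f_0$ is a compactly supported probability measure, dominated convergence applies and yields
\begin{equation*}
   \iint \zeta\,\Epsf\dx\dv \;\to\; \iint \zeta\bigl(x(t;x_0),\,-\Grad K\ast\rho(t,x(t;x_0))\bigr)\,f_0(x_0,v_0)\dx_0\dv_0.
\end{equation*}

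The integrand on the right is independent of $v_0$, so integrating out $v_0$ replaces $f_0$ by its first marginal $\rho_0$, and applying \eqref{eqn:rho-trans} rewrites the remaining integral as $\int \zeta(x,-\Grad K\ast\rho(t,x))\,\rho(t,x)\dx$. This is precisely $\int \zeta\, f(t,\cdot,\cdot)\dx\dv$ for $f$ defined by \eqref{eqn:f-delta}, establishing weak-$^*$ convergence of $\Epsf(t,\cdot,\cdot)$ to $f(t,\cdot,\cdot)$ at every $t\in(0,T)$. To upgrade this to convergence in $W_1$ via Remark \ref{rmk:W1}, I would use Proposition \ref{prop:supp}: the supports of $\Epsf(t,\cdot,\cdot)$ are contained in a single ball $B_{R(T)}$ independent of $\Eps$, so the tail condition $\sup_\Eps \iint_{|(x,v)|>R}|(x,v)|\Epsf\,\dx\dv\to 0$ as $R\to\infty$ holds trivially, yielding the $W_1$ convergence. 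The first marginal of $f$ is $\rho$ by inspection, which also confirms compatibility with \eqref{convg:rho-Eps}.

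The main subtlety I anticipate is the boundary-layer behaviour at $t=0$: the $v$-component of the characteristics does not converge at $t=0$ (see Remark \ref{remark:boundary layer}), so the delta-profile \eqref{eqn:f-delta} cannot be the limit of $\Epsf$ there. The proof of \eqref{eqn:fconv} thus has to be understood for $t>0$, with the $t=0$ case handled separately (trivially, $\Epsf(0)=f_0$). A second point requiring care is that the convergence of characteristics provided by Theorem \ref{thm:conv-traj} is pointwise in $(x_0,v_0)$; however, since $f_0$ is compactly supported, the domain of integration is compact and pointwise convergence combined with the uniform bound $\|\zeta\|_\infty$ is enough for dominated convergence, so no uniform-in-$(x_0,v_0)$ estimate is needed.
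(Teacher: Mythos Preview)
Your argument is correct and in fact more economical than the paper's. Both proofs share the same core identity---the push-forward representation of $\Epsf$ combined with Theorem \ref{thm:conv-traj} and dominated convergence---to identify the limit as $\rho(t,x)\,\delta(v+\Grad K\ast\rho)$. The difference is in how the convergence \eqref{eqn:fconv} is first obtained. The paper begins with a compactness step mirroring Step~1 of Theorem \ref{thm:conv-sol}: it tests \eqref{eq:f-Eps-weak} against $\phi_1(t)\phi_2(x,v)$, uses Proposition \ref{thm:bound-f-eps} to bound the $1/\Eps$ term and obtain a uniform $W^{1,\infty}(0,T)$ bound on $t\mapsto\iint\phi_2\,\Epsf$, and then extracts a $W_1$-convergent subsequence via tightness and Prokhorov. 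Only afterwards is the limit identified through characteristics, and convergence of the full sequence is recovered from uniqueness of that limit. You bypass this detour entirely: since the push-forward identity plus Theorem \ref{thm:conv-traj} already yields the weak-$^*$ limit of the \emph{full} sequence directly, there is no need for subsequence extraction, and the upgrade to $W_1$ via Proposition \ref{prop:supp} and Remark \ref{rmk:W1} is immediate. Your observation about the boundary layer at $t=0$ is also apt; the paper glosses over this point, but as you note the characteristic argument only delivers \eqref{eqn:f-delta} for $t>0$.
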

\begin{proof}
The first part, expressed by equation \eqref{eqn:rho-trans}, follows from considerations made in Theorem \ref{thm:conv-sol} %and \ref{thm:conv-traj} 
(see in particular Step 3 in the proof of Theorem \ref{thm:conv-sol}). However, we show below how it can be derived directly in the $\Eps \to 0$ limit of the kinetic equation.

The limiting behaviour of $\Epsf$ was not explicitly stated or needed in Theorem \ref{thm:conv-sol}, but follows by arguments similar to those used for $\Epsrho$ in Step 1 of the proof of Theorem \ref{thm:conv-sol}. Let us sketch this argument briefly.

Fix $\phi_1 \in C_c^1(0, T)$ and $\phi_2 \in C^1_b(\Rd \times \Rd)$, and let $\phi(t, x, v) = \phi_1(t)\phi_2(x,v)$ in \eqref{eq:f-Eps-weak}. Find
\begin{multline}
\label{eqn:feps-weak-1}
    \int_0^T \phi'_1(t)\iint_{\Rd \times \Rd} \phi_2(x,v) \Epsf(t, x,v) \dx \dv \dt
    =\\
  - \int_0^T \phi_1(t) \iint_{\Rd \times \Rd}  \left[ \nabla_x \phi_2 \cdot v  - \frac{1}{\Eps} \nabla_v \phi_2 \cdot (v + \Grad K \ast \Epsrho) \right] \Epsf \dx\dv\dt.
\end{multline}
Denoting
\begin{equation*}
%\label{eqn:etaf1}
\tilde{\eta}_1(t) = \iint_{\Rd \times \Rd} \phi_2(x,v) \Epsf(t, x,v) \dx \dv,
\end{equation*}
then, by \eqref{eqn:feps-weak-1}, the weak derivative of $\tilde{\eta}_1$ is given by
\begin{equation*}    
    \tilde{\eta}'_1(t) = \iint_{\R^d \times \R^d} \left[ \nabla_x \phi_2 \cdot v  - \frac{1}{\Eps} \nabla_v \phi_2 \cdot (v + \Grad K \ast \Epsrho) \right]   \Epsf \dx\dv 
    \in L^\infty(0, T).
\end{equation*}
In the above, the boundness of the term that contains $\Eps$ follows from Proposition \ref{thm:bound-f-eps}.

Since $\tilde{\eta}_1(t)$ is uniformly bounded in $W^{1, \infty}(0, T)$, it converges uniformly on a subsequence. Hence, given any $\phi_2 \in C^1_b(\Rd \times \Rd)$, there exists a subsequence $\Eps_k$ and $\tilde{\eta}_2(t) \in C([0, T))$ such that 
\begin{equation}
\label{eqn:convfk}
    \iint_{\R^d \times \Rd} \phi_2(x,v) f_{\Eps_k}(t, x,v) \dx \dv 
\to \tilde{\eta}_2(t) \quad \text{uniformly in } C([0, T)) \quad \text{ as } \Eps_k \to 0.
\end{equation}

Similar to arguments used for $\Epsrho$, we note that the sequence  $\Epsf(t,\cdot,\cdot) \in \mathcal{P}_1(\Rd \times \Rd) $  is tight, 
and hence, for each $t \in [0, T)$, $\Epsf(t,\cdot,\cdot)$ converges  in the $W_1$ metric on a subsequence %that may depend on $\Eps$,
to a probability measure $f(t, \cdot,\cdot) \in \CalP_1(\R^d \times \Rd)$. 

By \eqref{eqn:convfk}, one can then argue similarly as in the proof of Theorem \ref{thm:conv-sol} that %there exists in fact a subsequence  $f_{\Eps_k}$, with $\Eps_k$ independent of $t$, such that
\begin{equation*}
\label{eqn:convfku}
    \iint_{\Rd \times \Rd} \phi_2(x,v) f_{\Eps_k}(t, x,v) \dx \dv 
\to \iint_{\Rd \times \Rd } \phi_2(x,v) f(t, x,v) \dx \dv \quad \text{uniformly on } [0, T) \quad \text {as } \Eps_k \to 0,
\end{equation*}
for any $\phi_2 \in C^1_b(\Rd \times \Rd)$ where $\Eps_k$ is independent of $t$. Further, by the uniqueness of $\tilde{\eta}_2(t)$, we have that at each $t \in [0, T)$ 
\begin{align}
 \label{convf:6}
     f_{\Eps_k} (t, \cdot,\cdot)  \stackrel{W_1}{\longrightarrow} f(t, \cdot, \cdot)
\qquad \text{ in } \CalP_1(\Rd \times \Rd) \quad \text{ as } \Eps_k \to 0.
\end{align}
This proves the convergence \eqref{eqn:fconv} on a subsequence. To upgrade it to convergence on the full sequence $\Eps\to 0$ we use the uniqueness of $f$, as derived from the arguments below.

Since $\Epsf(t) = \mathcal{T}^{t,\Eps}_{E[\Epsf]} \# f_0$, by \eqref{eqn:mass-transport} it holds that
\begin{equation}
\label{eqn:trans2}
\iint_{\Rd \times \Rd} \zeta (x,v) f_{\Eps_k}(t,x,v) dx dv = \iint_{\Rd \times \Rd} \zeta(\mathcal{T}^{t,\Eps_k}_{E[f_{\Eps_k}]}(X,V)) f_0(X,V) dX dV,
\end{equation}
for all $\zeta \in C_b(\Rd \times \Rd)$.

By the weak convergence of $f_{\Eps_k}$, the left-hand-side of \eqref{eqn:trans2} converges as $\Eps_k \to 0$:
\[
\iint_{\Rd \times \Rd} \zeta (x,v) f_{\Eps_k}(t,x,v) dx dv \to \iint_{\Rd \times \Rd} \zeta (x,v) f(t,x,v) dx dv.
\]
Due to convergence of trajectories \eqref{eqn:conv-trajx}-\eqref{eqn:vlimit}, the right-hand-side of \eqref{eqn:trans2} converges by Lebesgue's dominated convergence theorem,
\[
\iint_{\Rd \times \Rd} \zeta(\mathcal{T}^{t,\Eps_k}_{E[f_{\Eps_k}]}(X,V)) f_0(X,V) dX dV \to \iint_{\Rd \times \Rd} \zeta(x(t;X),-\nabla K \ast \rho (t,x(t;X))) f_0(X,V) dX dV,
\]
as $\Eps_k \to 0$. Combining the two, we find
\begin{equation}
\label{eqn:equal1}
 \iint_{\Rd \times \Rd} \zeta (x,v) f(t,x,v) dx dv = \iint_{\Rd \times \Rd} \zeta(x(t;X),-\nabla K \ast \rho (t,x(t;X))) \rho_0(X,V) dX,
\end{equation}
for all $\zeta \in C_b(\Rd \times \Rd)$.

First note that \eqref{eqn:rho-trans} can be derived from \eqref{eqn:equal1}. Indeed, choose $\zeta(x,v) = \varphi(x)$ in \eqref{eqn:equal1} to find 
\[
\int_{\Rd} \varphi(x) \rho(t,x) dx = \int_{\Rd} \varphi (x(t;X)) \rho_0(X) dX,
\]
for all $\varphi \in C_b(\Rd)$. The equation above represents exactly the mass transport given by \eqref{eqn:rho-trans}.
%\[
% \int_{\R^{d}} \varphi (x,v) \rho(t,x) dx = \int_{\R^{d}} \varphi(x(t;X)) \rho_0(X,V) dX
%\]

Now, observe that \eqref{eqn:f-delta} is equivalent to 
\begin{equation*}
 \iint_{\Rd \times \Rd} f(t,x,v) \zeta(x,v) dx dv = \int_{\Rd}  \zeta(x,-\nabla K \ast \rho(t,x)) \rho(t,x) dx,
\end{equation*}
for all test functions $\zeta \in C_b(\Rd \times \Rd)$, which can be inferred immediately from \eqref{eqn:rho-trans} and \eqref{eqn:equal1}.

Finally, the {\em unique} explicit representation of the limiting density $f$ implies that the convergence in \eqref{convf:6} holds on the full sequence $\Epsf$, as desired in \eqref{eqn:fconv}.
\end{proof}

%%%%%%%%%%%%%%%%%%%%%%%%%%%%%%%%%%%%%%%%%%%%%%%%%%%%%%%%%%%

%\bibliographystyle{elsarticle}
\bibliographystyle{plain}
\bibliography{lit}

%%%%%%%%%%%%%%%%%%%%%%%%%%%%%%%%%%%%%%%%%%%%%%%%%%

\end{document}